\documentclass[a4paper,11 pt]{amsart}
\usepackage{hyperref}

\usepackage[usenames,dvipsnames,svgnames,table]{xcolor}
\usepackage[all]{xy}
 \usepackage{enumitem}
 \usepackage[normalem]{ulem}

\newcommand{\tsk}[1]{\textcolor{YellowOrange}}

\usepackage{tikz}
\usepackage{tikz-cd}

\makeatletter
\def\@endtheorem{\endtrivlist}
\makeatother

\usepackage[active]{srcltx}
\usepackage{amsmath}
\usepackage{amssymb}
\usepackage{amscd}
\usepackage{amsthm}
\usepackage[latin1]{inputenc}
\usepackage{mathrsfs}  

\usepackage{nicefrac}
\renewcommand{\Re}{\operatorname{Re}}

\newcommand{\numero}{9}


\newtheorem{teo}{Theorem}[section]
\newtheorem{defin}[teo]{Definition}
\newtheorem{prop}[teo]{Proposition}
\newtheorem{cor}[teo]{Corollary}
\newtheorem{lemma}[teo]{Lemma}
\theoremstyle{definition}

\newtheorem{remark}[teo]{Remark}

\newtheoremstyle{dico}
 {\baselineskip}   
  {\topsep}   
  {}  
  {0pt}       
  {} 
  {.}         
  {5pt plus 1pt minus 1pt} 
  {}          
\theoremstyle{dico}
\newtheorem{say}[teo]{}
\numberwithin{equation}{section}


\newcommand{\ra}{\rightarrow}
\newcommand{\C}{\mathbb{C}}
\newcommand{\R}{\mathbb{R}}
\newcommand{\Zeta}{{\mathbb{Z}}}

\newcommand{\QQ}{{\mathbb{Q}}}
\newcommand{\meno}{^{-1}}

\newcommand{\alfa}{\alpha}

\newcommand{\vacuo}{\emptyset}

\newcommand{\La}{\Lambda}

\newcommand{\restr}[1]          {\vert_{#1}}
\newcommand{\Aut}{\operatorname{Aut}}

\newcommand{\End}{\operatorname{End}}

\newcommand{\spur}{\operatorname{Tr}}
\renewcommand{\setminus}{-}

\newcommand{\eps}{\varepsilon}
\renewcommand{\phi}{\varphi}
\newcommand{\lds}{\ldots}
\newcommand{\cds}{\cdots}
\newcommand{\cd}{\cdot}

\newcommand{\sx}{\langle}
\newcommand{\xs}{\rangle}

\newcommand{\ga}{\gamma}
\newcommand{\Ga}{\Gamma}

\newcommand{\gr}{\mathsf{g}}
\newcommand{\Fix}{\mathsf{Fix}}

\newcommand{\id}{\operatorname{id}}

\newcommand{\GL}{\operatorname{GL}}

\newcommand{\PP}{\mathbb{P}}

\renewcommand{\phi}             {\varphi}

\newcommand{\HH}{\mathfrak{H}}
\newcommand{\sieg}{\HH_g}
\newcommand{\mt}{\operatorname{MT}}

 \newcommand{\Sl}                {\operatorname {SL}}
 \newcommand{\psl}                {\operatorname {PSL}}

\newcommand{\Sp}                {\operatorname {Sp}}
     
 \newcommand{\tr}              {\operatorname {tr}}

\newcommand{\mm}{{\mathbf{m}}}
\newcommand{\jac}{\mathsf{T}^0_g}
\newcommand{\tor}{\mathsf{T}_g}

\newcommand{\ag}{\mathsf{A}_g}
\newcommand{\mg}{\mathsf{M}_g}
\newcommand{\zg}{\mathsf{Z}}
\newcommand{\datum}{{(\mm, G, \theta)}}
\newcommand{\zgm}{\mathsf{Z}(\mm, G, \theta)}
\newcommand{\Diff}{\operatorname{Diff}}
\newcommand{\Map}{\operatorname{Map}}
\newcommand{\braid}{\mathbf{B_r}}
\newcommand{\ut}{U_t}

\begin{document}

\author{Paola Frediani, Alessandro Ghigi and Matteo Penegini}

\title{Shimura varieties in the Torelli locus via Galois coverings}

\address{Universit\`{a} di Pavia} \email{paola.frediani@unipv.it}
\address{Universit\`a di Milano Bicocca}
\email{alessandro.ghigi@unimib.it} \address{Universit\`{a} di Milano}
\email{matteo.penegini@unimi.it}

\thanks{ The first and second authors were partially supported by PRIN
  2012 MIUR ''Moduli, strutture geo\-me\-tri\-che e loro
  applicazioni''.  The first author was partially supported also by
  FIRB 2012 ''Moduli spaces and applications'' and by a grant of
  Max-Planck Institut f\"ur Mathematik, Bonn.  The second author was
  supported also by FIRB 2012 ''Geometria differenziale e teoria
  geometrica delle funzioni''.  The third author was partially
  supported by PRIN 2010 MIUR ``Geometria delle Variet\`a Algebriche".
} \subjclass[2000]{14G35, 14H15, 14H40, 32G20 (primary) and 14K22
  (secondary)}

\begin{abstract}
  Given a family of Galois coverings of the projective line, we give a
  simple sufficient condition ensuring that the closure of the image
  of the family via the period mapping is a special (or Shimura)
  subvariety of $A_g$. By a computer program we get the list of all
  families in genus $g \leq \numero$ satisfying our condition.  There
  are no families with $g=8,9$, all of them are in genus $g \leq 7$.
  These examples are related to a conjecture of Oort.  Among them we
  get the cyclic examples constructed by various authors (Shimura,
  Mostow, De Jong-Noot, Rohde, Moonen and others) and the abelian
  non-cyclic examples found by Moonen-Oort.  We get 7 new non-abelian
  examples.
\end{abstract}

\maketitle

\tableofcontents{}

\section{Introduction}

\begin{say}
  Denote by $\ag$ the moduli space of principally polarized abelian
  varieties of dimension $g$ over $\C$, by $\mg$ the moduli space of
  smooth complex algebraic curves of genus $g$ and by $j \colon \mg
  \ra \ag$ the period mapping or Torelli mapping.  We set $\jac:=j
  (\mg)$ and call it the open Torelli locus.  The closure of $\jac$ in
  $\ag$ is called the \emph{Torelli locus} (see e.g.
  \cite{moonen-oort}) and is denoted by $\tor$.  From the complex
  analytic point of view, $\ag= \Sp(2g, \Zeta) \backslash
  \mathfrak{H}_g $, where $\mathfrak{H}_g$ is the Siegel upper
  half-space.  Therefore $\ag$ has a natural structure of complex
  analytic orbifold and the symmetric metric on $\mathfrak{H}_g$
  descends to a locally symmetric orbifold metric on $\ag$.  We will
  always consider this metric on $\ag$.  It is an interesting problem
  to study the metric properties of the inclusion $\jac \subset
  \ag$. The moduli space of curves also admits a natural structure of
  complex orbifold and the period mapping is an orbifold map. Moreover
  outside the hyperelliptic locus the period mapping is an orbifold
  immersion \cite{oort-steenbrink}.  This allows to study $\jac$
  (outside the hyperelliptic locus) using Riemannian geometry,
  i.e. via the second fundamental form. This is the direction taken in
  \cite{cpt}, \cite{cf1}, \cite{cf2}, \cite{cfg}.  One expects that
  $\jac$ be very curved inside $\ag$. For example the second
  fundamental form should be in some sense non-degenerate and in
  particular $\jac$ should contain very few totally geodesic
  submanifolds of $\ag$. Among the results in this direction we
  mention the following ones. Let $\mathsf{Z} $ be a totally geodesic
  subvariety of $ \ag$ such that $\zg\subset \tor$ and $\zg\cap \jac
  \neq \vacuo$. Toledo \cite{toledo} considered the case when $\zg$ is
  a compact curve and obtained an upper bound for the area and some
  curvature restrictions for $\zg$. Hain \cite{hain} and later de Jong
  and Zhang \cite{dejong-zhang} proved under some conditions, that if
  $\zg$ is a locally symmetric variety uniformized by an irreducible
  symmetric domain, this must be the complex ball.  (Recall that a
  submanifold of $\ag$ is totally geodesic if and only if it is a
  locally symmetric submanifold.)
  Very recently Liu, Sun, Yang and Yau \cite{liu-yau-ecc} got the same
  result by differential geometric techniques, under the assumption
  that $\zg$ is contained in $\jac$.  In \cite{cfg} Colombo and the
  first two authors used the second fundamental form to get an upper
  bound for the dimension of $\zg$ depending only on the genus.  Other
  related papers include \cite{lu-zuo-Mumford-prep},
  \cite{grushevsky-moeller-prep}.
\end{say}

\begin{say}
  The stack $\ag$ (or equivalently its associated complex analytic
  orbifold) parametrizes Hodge structures of weight 1 on a lattice of
  rank $2g$. On $\ag$ there is a natural variation of Hodge structure
  over $\mathbb{Q}$ (in the orbifold sense), whose fibre over $A$ is
  $H^1(A, \mathbb{Q})$.  The Hodge loci for this variation of Hodge
  structure are called \emph{special subvarieties} or \emph{Shimura
    subvarieties}, see \cite[\S 3.3]{moonen-oort}.  The special
  varieties are totally geodesic and an important theorem of Moonen
  \cite{moonen-linearity-1} says that an algebraic totally geodesic
  subvariety of $\ag$ is special if and only if it contains a CM
  point.  Arithmetical consideration led Oort \cite{oort-can} to the
  following expectation: for large $g$ there should be no
  positive-dimensional special subvariety $\zg$ of $\ag$, such that
  $\zg \subset \tor$ and $\zg\cap\jac\neq \vacuo$.  See \cite[\S
  4]{moonen-oort} for more details.  On the other hand, for low genus
  there are examples of such $\zg$ (see
  \cite{shimura-purely-transcendental, mostow-discontinuous,
    dejong-noot, rohde, moonen-special} and also the survey \cite[\S
  5]{moonen-oort}.) All the examples known so far are in genus $\leq
  7$ and are constructed using abelian Galois covers of the line.
\end{say}

\begin{say}
  The purpose of this paper is, first of all, to give a simple
  sufficient condition for a family of Galois covers of the line to
  yield a Shimura variety (see Theorem \ref{mainteoA} below). This
  criterion simplifies and extends the previous arguments.  Next, we
  apply it to construct new examples of such families for non-abelian
  Galois coverings. Moreover we analyze in detail the geometry of all
  examples, both with abelian and with non-abelian Galois group,
  giving the complete list of all the distinct Shimura families in
  genus $g \leq \numero $ obtained using this criterion.

  A Galois covering of $C \ra \PP^1$ is determined by the ramification
  data $\mm:=(m_1, \lds, m_r)$, the Galois group $G$, an epimorphism
  $\theta : \Ga_r \ra G$ and the branching points $t_1, \lds, t_r
  \in \PP^1$, (see \S \ref{covering-section} for the notation).
  Fixing the datum $(\mm, G, \theta)$ and letting the points $t_j$
  vary, one gets a family of curves and a corresponding family of
  Jacobians.  Denote by $\zg(\mm, G, \theta) $ the closure of this set
  of Jacobians in $\ag$. It is an $(r-3)$--dimensional subvariety of
  $\ag$.  If $C \ra \PP^1$ is one of the coverings, consider the
  representation $\rho$ of $G$ on $H^0(C,K_C)$ and on its symmetric
  power $S^2 H^0(C, K_C)$. We set
  \begin{gather*}
    N:= \dim (S^2 H^0(C, K_C)) ^ G.
  \end{gather*}
  Both the isomorphism class of $\rho$ and the number $N$ depend only
  on the datum $\datum$, not on the particular element $C$ of the
  family.
\end{say}

\begin{teo} [see Theorem \ref{criterio}]
  \label{mainteoA}
Let $(\mm, G, \theta)$ be a datum as above.  Assume that
  \begin{gather}
    \label{bona}
    \tag{$\ast$} N = r-3.
  \end{gather}
  Then $\zg (\mm, G, \theta)$ is a special subvariety of PEL type of
  $\ag$, such that $\zg (\mm, G, $ $ \theta)$ $ \subset \tor$ and
  $\zg(\mm, G, \theta)\cap \jac \neq \vacuo$.
\end{teo}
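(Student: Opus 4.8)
The plan is to realize $\zg(\mm,G,\theta)$ as a subvariety of full dimension inside a PEL-type Shimura subvariety that evidently contains it, and then conclude by comparing dimensions. First I would introduce the ambient special variety. Every curve $C$ in the family carries a faithful action of $G$, being a Galois cover with group $G$; hence its Jacobian $J(C)$ inherits an embedding $\QQ[G] \to \End(J(C)) \otimes \QQ$ compatible with the principal polarization, and the induced representation of $G$ on $H^0(C,K_C) = H^{1,0}(J(C))$ is isomorphic to $\rho$ for every member of the family. Let $\zg'$ be the locus in $\ag$ of principally polarized abelian varieties admitting a $G$-action of this type (same algebra, same polarization type, and same $G$-character on $H^{1,0}$). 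By the general theory of PEL Shimura data, $\zg'$ is a special subvariety of PEL type of $\ag$, hence smooth and totally geodesic, and by construction $\zg(\mm,G,\theta) \subset \zg'$.

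Next I would pin down $\dim \zg'$. Since $\zg'$ is smooth, its dimension equals the dimension of its tangent space at a point $[A] = [J(C)]$, which is the $G$-invariant part of $T_{[A]}\ag$. Using the standard identification $T_{[A]}\ag \cong S^2 H^1(A, \OO_A)$ together with the $G$-equivariant isomorphism $H^1(A, \OO_A) \cong \overline{H^0(C,K_C)}$, one obtains $T_{[A]}\zg' \cong \big(S^2 \overline{H^0(C,K_C)}\big)^G$, whose dimension equals that of $(S^2 H^0(C,K_C))^G = N$. Therefore $\dim \zg' = N$.

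On the other hand, the family $\zg(\mm,G,\theta)$ is swept out by the configurations of the $r$ branch points on $\PP^1$ modulo $\operatorname{PGL}_2(\C) = \Aut(\PP^1)$, so it is irreducible of dimension $r-3$, as already recorded above. The inclusion $\zg(\mm,G,\theta) \subset \zg'$ then gives $r-3 \le N$, and hypothesis (\ref{bona}) turns this into an equality. An irreducible subvariety whose dimension equals that of the irreducible component of $\zg'$ containing it must coincide with that whole component; since every component of a special subvariety is again special of PEL type, $\zg(\mm,G,\theta)$ is special of PEL type. The two remaining assertions are immediate: $\zg(\mm,G,\theta) \subset \tor$ because it is by definition the closure of a set of Jacobians, and $\zg(\mm,G,\theta) \cap \jac \neq \vacuo$ because the smooth members of the family have Jacobians lying in the open Torelli locus.

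I expect the heart of the argument to be the second step. The two points that need genuine care are the $G$-equivariant identification $H^1(A,\OO_A) \cong \overline{H^0(C,K_C)}$, which converts the symmetric-square model of $T_{[A]}\ag$ into the representation-theoretic quantity $N$, and the verification that $\zg'$ is honestly a PEL Shimura subvariety whose dimension is computed by this invariant --- that is, that $\dim (S^2 H^0(C,K_C))^G$ equals the dimension of the Hermitian symmetric domain attached to the endomorphism-and-polarization datum, a sum of unitary, symplectic and orthogonal contributions over the isotypic components of $\rho$. Once these identifications are secured, the dimension comparison that closes the proof is purely formal.
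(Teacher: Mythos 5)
Your proposal is correct and follows essentially the same route as the paper: the paper realizes your ambient locus $\zg'$ concretely as the image in $\ag$ of the fixed-point set $\sieg^{G'}$ of $G'\subseteq\Sp(\La,E)$, identifies it with the PEL subvariety $\zg(D_{G'})$ (Proposition \ref{prop-Bert}), and computes its dimension as $\dim(S^2\La_\R)^{G'}=N$ before concluding by the same dimension comparison. The only difference is one of presentation: what you delegate to ``the general theory of PEL Shimura data'' (smoothness, irreducibility of the relevant component, and the identification of the tangent space with the $G$-invariants of the symmetric square of the Hodge piece) is exactly what the paper proves by hand via the Cartan fixed-point argument in Lemma \ref{Cartan}, Corollary \ref{bert} and Lemma \ref{dimensione}.
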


Observe that when $r=3$ and $N=0$ this yields a criterion for a
Jacobian to have complex multiplication, see Corollaries \ref{ACM} and
\ref{JCM}.

\begin{say}  
  The condition in Theorem \ref{mainteoA} already appears in
  \cite[Prop. 5.4]{cfg}. There it is shown that under this condition
  $\zg(\mm,G,\theta)$ is totally geodesic.  The proof uses the second
  fundamental form of the family of Jacobians. Since special
  subvarieties are totally geodesic, the theorem above strenghtens the
  result in \cite {cfg} with a different proof.

  We have used the criterion in Theorem \ref{criterio} for a
  systematic search of special subvarieties of the form $\zgm$. At the
  beginning, especially in genus 4, we used a classification of the
  groups acting on algebraic curves from the point of view of the
  representation on holomorphic 1-forms. This classification is
  available in genus $g \leq 5$, thanks to the efforts of Akikazu
  Kuribayashi, Izumi Kuribayashi and Hideyuki Kimura
  \cite{kuribayashi+akikazu-families,kuribayashi-linear,
    kurikuri,kuribayashi-akikazu-kimura}. See also
  \cite{magaard-e-soci}.  Breuer \cite{breuer} has made a systematic
  computation of the possible automorphism groups for all the curves
  of genus $g\leq 48$.  For the calculations done in this paper we
  used the computer algebra program \verb|MAGMA| \cite{MA}. Our script
  is available at:

  \verb|users.mat.unimi.it/users/penegini/|

  \verb|publications/PossGruppigFix_v2Hwr.m|

  \smallskip

\noindent
Using this script we determine all the families $\zg \datum$ with
genus $g \leq \numero$ and we compute the number $N$, checking which
families satisfy the condition of Theorem \ref{criterio}.  Our results
are summarized in the following.
\end{say}

\begin{teo}
  \label{mainteoB} For genus $g\leq \numero$ there are exactly 40 data
  $(\mm, G, \theta)$ such that $N=r-3 > 0$.  For these 40 data the
  image $\zgm$ is a special subvariety of $\ag$ of positive dimension,
  which is contained in $\tor$ and intersects $\tor^0$.  Among these
  data there are 20 cyclic ones and 7 abelian non-cyclic ones.  The
  remaining 13 have non-abelian Galois group.  All these data occur in
  genus $g\leq 7$.
\end{teo}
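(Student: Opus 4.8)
The plan is to separate the geometric content from the enumerative content. Once a datum $\datum$ with $N=r-3>0$ is exhibited, the assertions that $\zgm$ is a special subvariety contained in $\tor$ and meeting $\jac$ are \emph{exactly} the conclusion of Theorem \ref{mainteoA}, applied datum by datum; and since the excerpt records that $\dim \zgm = r-3$, the hypothesis $N=r-3>0$ forces $r\geq 4$ and hence positive dimension automatically. So nothing new has to be proved for the geometry: the substance of the statement is the exhaustive classification, namely that precisely $40$ data satisfy condition \eqref{bona} for $g\leq\numero$, that they split as $20$ cyclic, $7$ abelian non-cyclic and $13$ non-abelian, and that all of them have $g\leq 7$. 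This is a finite verification, and the task is to organise it so that completeness is guaranteed.

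First I would bound the search space for each fixed $g$ with $2\leq g\leq\numero$. A datum is the same thing as a finite group $G$ together with a generating vector of signature $(0;m_1,\dots,m_r)$, i.e.\ an epimorphism $\theta$ from the orbifold fundamental group of $\PP^1$ minus $r$ points, constrained by the Riemann--Hurwitz relation
\begin{gather*}
  2g-2 = |G|\Bigl(-2 + \sum_{i=1}^{r}\bigl(1-\tfrac{1}{m_i}\bigr)\Bigr).
\end{gather*}
Because \eqref{bona} requires $r-3>0$, only signatures with $r\geq 4$ are relevant. Hyperbolicity of the right-hand side together with the Hurwitz bound $|G|\leq 84(g-1)$ leaves only finitely many admissible signatures, and for each signature Breuer's tables \cite{breuer} (or a direct enumeration of groups of the bounded order) give the finite list of groups $G$ admitting a generating vector of that type. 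For each such $G$ one then enumerates the epimorphisms $\theta$.

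Next, for every candidate I would compute $N$ by character theory. The character of the representation $\rho$ of $G$ on $H^0(C,K_C)$ is given explicitly by the Chevalley--Weil formula in terms of the signature and the local monodromy encoded in $\theta$, and, as the excerpt notes, this depends only on $\datum$. From $\chi_\rho$ one obtains
\begin{gather*}
  N = \dim\bigl(S^2 H^0(C,K_C)\bigr)^G = \frac{1}{2|G|}\sum_{h\in G}\bigl(\chi_\rho(h)^2+\chi_\rho(h^2)\bigr),
\end{gather*}
the standard formula for the dimension of $G$-invariants in a symmetric square. Filtering the list for $N=r-3>0$ singles out the data of interest; running this loop over all $g\leq\numero$ is precisely what the \texttt{MAGMA} script cited in the excerpt does.

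The main obstacle is neither the geometry nor the character computation but the bookkeeping needed to reach the \emph{exact} count. Two generating vectors define the same family $\zgm$, and so must not be counted twice, exactly when they differ by the combined action of $\Aut(G)$ on $G$ and of the braid group $\braid$ on the ordered branch points; hence the raw list of filtered epimorphisms must be reduced to orbit representatives under $\Aut(G)\times\braid$. One must further verify that distinct representatives do not accidentally produce the same family of Jacobians for some nonobvious reason, and only then sort the survivors by the type of $G$. Carrying this out uniformly over $2\leq g\leq\numero$ yields the partition $40 = 20 + 7 + 13$, together with the fact that no datum survives the filter for $g=8,9$, so that every example occurs in genus $g\leq 7$.
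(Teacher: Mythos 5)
Your proposal is correct and follows essentially the same route as the paper: the geometric assertions are delegated to Theorem \ref{mainteoA} applied datum by datum, and the count is an exhaustive finite search organised exactly as in the Appendix (Riemann--Hurwitz bounds on signatures and group orders, enumeration of spherical systems of generators up to the $\braid\times\Aut(G)$--action, Chevalley--Weil to get $\chi_\rho$, and the symmetric-square invariant formula for $N$), which the authors implement in their \texttt{MAGMA} script. The only superfluous step is your final worry about distinct representatives giving the same family: the figure $40$ counts Hurwitz equivalence classes of data, not distinct subvarieties, so that identification problem belongs to Theorem \ref{mainC}, not to this statement.
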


The 20 cyclic data have been found in
\cite{shimura-purely-transcendental,
  mostow-discontinuous,dejong-noot,rohde, moonen-special} and the 7
abelian non-cyclic data have been found in \cite[\S 5]{moonen-oort}.
The 13 non-abelian data are new.  Professor Xin Lu informed us that
one of the non-abelian families has already been studied very recently
and from a different point of view in
\cite[Ex. 7.2]{lu-zuo-Mumford-prep}.  See Table \ref{data} for the
list of all the 40 data.

  \begin{say}
    It should be remarked that as far as we know the condition
    \eqref{bona} is only sufficient, but not necessary for $\zgm$ to
    be special.  So one cannot exclude that some datum $(\mm, G,
    \theta)$ with $N > r-3$ gives a special $\zgm$. For the case of
    cyclic coverings this has been ruled out by Moonen
    \cite{moonen-special} using deep results in arithmetic
    geometry. Thus in the case of cyclic groups the condition that
    $N=r-3$ is both sufficient and necessary.
  \end{say}

\begin{say}
  It can happen that two different data $\datum$ and $(\mm', G',
  \theta')$ give rise to the same subvariety in $\ag$,
    i.e. $\zg\datum = \zg (\mm', G', \theta') $.  In fact the 40 data
  we found do not give rise to 40 different subvarieties. In
  \S \ref{id-section} we describe systematically this phenomenon and
  we get the complete list of the distinct subvarieties, which
  is summarized in the following theorem.
\end{say}

\begin{teo}
  \label{mainC}
  The 40 data satisfying \eqref{bona} yield exactly 30 distinct
  Shimura subvarieties, which are listed in Table
  \ref{27}. The numbers refer to the data listed in Table \ref{data}.
\end{teo}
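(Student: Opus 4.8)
The plan is to reduce the statement to an identification problem purely among \emph{families of curves}. By construction $\zgm$ is the closure in $\ag$ of the set of Jacobians $JC$ of the curves $C$ that occur as $G$-coverings of $\PP^1$ with the datum $\datum$, and the underlying family of curves is an irreducible $(r-3)$-dimensional subvariety of $\mg$. Since the Torelli morphism $j\colon \mg \ra \ag$ is injective on isomorphism classes of curves, the image $\zgm$ determines, and is determined by, the closure of the family in $\mg$: if two data gave the same subvariety of $\ag$ then, the two images being dense in a common irreducible variety, they would share a dense set of Jacobians, and injectivity of $j$ would force the two families of curves to have the same closure in $\mg$. Hence I would first record the crucial structural fact that \emph{every} coincidence $\zg\datum = \zg(\mm',G',\theta')$ arises from a single family of curves carrying two distinct Galois-covering structures over $\PP^1$; in particular different curves never produce the same subvariety. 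Alongside this I would list the obvious numerical obstructions to an identification: equality of dimension $r-3 = r'-3$, hence of the number of branch points $r=r'$, equality of the genus $g$, and equality of the isomorphism class of the Hodge representation $\rho$. These invariants already keep most of the $40$ data from being identified with one another.

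Next I would isolate the only geometric mechanism that can produce two Galois structures on one family. If $C$ is the generic member of $\zg\datum$, then $G$ embeds in $\Aut(C)$, but $\Aut(C)$ may be strictly larger; when that happens, $C$ is simultaneously a Galois covering of $\PP^1 = C/H$ for the various subgroups $G \subseteq H \subseteq \Aut(C)$ whose quotient is rational, and each such presentation with exactly $r$ branch points furnishes one of the data in Table \ref{data}. All these presentations describe the same family of curves, hence the same subvariety of $\ag$. Concretely, for the generic curve of each of the $40$ families I would compute its full automorphism group together with the complete list of its presentations as a Galois covering of the line with $r$ branch points; this is precisely the computation carried out by the \verb|MAGMA| script. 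Grouping the data according to the resulting relation produces the equivalence classes.

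The final step is the bookkeeping: verifying that this relation merges the $40$ data into exactly $30$ classes and choosing a representative of each, thereby assembling Table \ref{27}. For each coincidence one must exhibit the explicit identification, namely the chain of subgroups $G \subseteq H \subseteq \Aut(C)$ and the matching of the ramification data; for the $30$ putatively distinct classes one must check that no identification has been overlooked, using the numerical invariants $r$, $g$ and $\rho$, refined when necessary by the isomorphism class of the generic Jacobian as a polarized abelian variety carrying its $G$-action, i.e. by the PEL datum underlying the special variety.

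The hard part will be the distinctness half of the count rather than the coincidences. Exhibiting an identification is routine once the common full automorphism group is known, but certifying that two families sharing the same dimension, genus and representation $\rho$ are nevertheless \emph{not} equal requires an invariant fine enough to separate them in every remaining case. I expect that the isogeny decomposition of the generic Jacobian, together with the endomorphisms induced by the $G$-action, supplies such an invariant, and the bulk of the labour is the exhaustive case analysis over the data in Table \ref{data} confirming that these invariants do indeed distinguish the $30$ classes.
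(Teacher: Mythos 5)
Your identification mechanism is exactly the one the paper uses: the reduction via Torelli injectivity to coincidences of families in $\mg$, and the observation that any coincidence $\zg\datum=\zg(\mm',G',\theta')$ forces both $G$ and $G'$ to embed in the automorphism group $G''$ of a common generic curve $C$, producing a third datum $(\mm'',G'',\theta'')$ with $C/G''=\PP^1$ --- this is Lemma \ref{trick}. So the ``coincidence'' half of your plan matches the paper, and the explicit identifications (e.g.\ $(25)=(38)$, $(3)=(5)=(28)=(30)$) are carried out just as you describe, by exhibiting quotient maps $C\ra C/H$ and checking $g(C/H)=0$ by Riemann--Hurwitz.

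The gap is in the distinctness half, which you yourself flag as the hard part but do not actually resolve. You propose to compute the full automorphism group of the generic member of each family (asserting this is ``precisely the computation carried out by the MAGMA script'' --- it is not: the script only enumerates data up to Hurwitz equivalence and computes $N$ via Chevalley--Weil; determining $\Aut(C)$ for a \emph{generic} $C$ in a family is a different and harder problem), and to fall back on the isogeny decomposition of the generic Jacobian, whose sufficiency you do not establish. The paper avoids both difficulties with the quantitative content of Lemma \ref{trick}, which your proposal misses: from $r''-3\le N''\le N=r-3\le r''-3$ one gets that the overgroup datum $(\mm'',G'',\theta'')$ \emph{again satisfies} \eqref{bona}, hence $G''$ must occur among the finitely many groups in Table \ref{data} for that genus, and moreover $|G''|\ge |G|\cdot|G'|/k$. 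These two facts reduce distinctness to elementary group theory (maximality of $G$ and $G'$ in the list, non-existence of subgroups of the right order or of elements of the right order), supplemented by membership in the hyperelliptic locus (read off from $\chi_\rho$: an involution with $\chi_\rho=-g$ is hyperelliptic, and a hyperelliptic family can only coincide with another hyperelliptic one) and by comparing the traces $\tr(\rho(H))\subseteq\tr(\rho(G''))$ for subgroups. This is how the paper separates, e.g., $(6)$ from $(31)$, $(8)$ from $(32)$, $(9)$ from $(22)$, and $(34)$ from $(22)$. Without the observation that the combined datum stays inside the finite list satisfying \eqref{bona}, your case analysis has no a priori bound on the candidate overgroups and would stall.
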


\begin{table}[h]
  \label{27}
  \caption{Distinct Shimura subvarieties}
  \begin{center}
    \begin{tabular}[t] {|c|c|m{.35\linewidth}|}
      \hline
      $g$ & $\dim $  & families \\
      \hline \hline
      $1$ & $1$ & $(1)= (21)$\\
      \hline \hline
      2 &  $1$ & 
      $(3) =(5) = (28) = (30) $
      $(4) = (29) $ \\
      \hline
      2      & $2$ & $(26)$ \\
      \hline
      2     & $3$ & $(2)$ \\
      \hline \hline
      $3$ & $1$ & 
      \begin{minipage}[t]{1\linewidth}
        $  (7) = (23) = (34)$ \\
        $ (9)$  \\
        $  (22) $ \\
        $ (33) = (35)$
      \end{minipage}
      \\
      \hline
      3     &      $2$ &
      \begin{minipage}[t]{1\linewidth}
        $  (6)$  \\
        $(8)$  \\
        $(31)$   \\
        $(32)$
      \end{minipage}
      \\
      \hline 
      3    & $3 $ &$ (27)$ \\
      \hline
    \end{tabular}
    \hspace{3EM}
    \begin{tabular}[t]{|c|c|m{.2\linewidth}|}
      \hline
      $g$ & $\dim $  & families\\
      \hline 
      \hline
      $4$ &      $   1$ & 
      \begin{minipage}[t]{1\linewidth}
        $  (11)$  \\
        $  (12)$ \\
        $  (13) = (24)$ \\
        $  (25) = (38) $ \\
        $  (36)$ \\
        $ (37) $
      \end{minipage}
      \\
      \hline
      4      &       $2$ & $   (14)$\\
      \hline
      4     &      $3 $ &$ (10)$\\
      \hline \hline
      $5$ & $1$ & 
      \begin{minipage}[t]{1\linewidth}
        $(15)$ \\
        $(39)$
      \end{minipage}
      \\
      \hline 
      \hline
      $6$ &      $   2$ & 
        $ (16) $
            \\
      \hline
      6      &       $1$ & $   (17)$\\
      \hline
      6     &      $1 $ &$ (18)$\\
      \hline
\hline
      $7$ & $1$ &
      \begin{minipage}[t]{1\linewidth}
        $  (19)$ \\
        $(20)$\\
        $ (40)$
      \end{minipage}
      \\
      \hline
    \end{tabular}
  \end{center}
\end{table}

In particular in genus 2 all families are abelian. There are three
non-abelian Shimura families in genus 3, two in genus 4, one in genus
5 and one in genus 7.

One of the two non--abelian families in genus 4 (family (36)) and the
non--abelian family in genus 5 (family (39)) are contained in the
hyperelliptic locus, see \ref{HE-4}. All the other non--abelian
Shimura families are not contained in the hyperellitic locus, see
\ref{HE-1}, \ref{HE-2} and the proof of Theorem \ref{id-3}.

\begin{say}
  The plan of the paper is the following.

  In \S \ref{covering-section} we fix the notation and we recall some
  preliminary results on families of Galois coverings of the
  projective line.

  In \S \ref{Shimura-section} we give a brief summary of definitions
  and results on special subavarieties of $\ag$, especially those of
  PEL type. Next we prove Theorem \ref{mainteoA}.

  Section \S \ref{examples-section} is devoted to the new
  examples.  In two sample cases we do the computation of $N$ by hand.
  We also make some additional remarks on the hyperellipticity of the
  families and on inclusions between them.

  Section \S \ref{id-section} is devoted to the proof of Theorem
  \ref{mainC}.

  In the Appendix we explain the computations performed by the
  script. Table \ref{data} contains the list of all data satisfying
  \eqref{bona}.

\end{say}

\medskip

{\bfseries \noindent{Acknowledgements.} }  We wish to thank Elisabetta
Colombo for many interesting conversations, which led us to attack
this problem. We also wish to thank Bert van Geemen for crucial help
with Proposition \ref{prop-Bert} and Ben Moonen for useful emails.  We
thank the anonymous referees for reading very carefully the manuscript
and for many questions and suggestions that helped us to improve the
paper a lot.  The first and second authors wish to thank the
Max-Planck Institut f\"ur Mathematik, Bonn for excellent conditions
provided during their visit at this institution, where part of this
work was prepared.

\section{Galois coverings of the line}
\label{covering-section}

\begin{say} \label{theo: Riemann} For any integer $r\geq 3$ let
  $\Ga_r$ denote the group with presentation $\Ga_r=\sx \ga_1, \lds,
  \ga_r | \ga_1\cds \ga_r =1\xs$.

\begin{defin}
  A \emph{datum} is a triple $\datum$, where $\mm :=(m_1, \ldots ,m_r)
  $ is an $r$-tuple of integers $m_i \geq 2$, $G$ is a finite group
  and $\theta : \Ga_r \ra G$ is an epimorphism such that $\theta
  (\ga_i)$ has order $m_i$ for each $i$.
\end{defin}
Let $t: = (t_1, \lds, t_r)$ be an $r$-tuple of distinct points in
$\PP^1$.  Set $\ut := \PP^1\setminus \{t_1, \lds, t_r\}$ and choose a
base point $t_0 \in \ut$. By elementary topology there exists an
isomorphism $\pi_1(\ut, t_0 ) \cong \Ga_r$ such that the element
$\gamma_i$ corresponds to a simple closed loop winding around the
point $t_i$ counterclockwise. If $f\colon C \longrightarrow \PP^1$ is
a Galois cover with branch locus $t$, set $V := f^{-1}(\ut)$. Then
$f\restr{V} : V \ra \ut$ is an unramified Galois covering. Let $G$
denote the group of deck transformations of $f\restr{V}$.  Then there
is a surjective homomorphism $
\pi_1(\ut, t_0 ) \longrightarrow G$, which is well-defined up to
composition by an inner automorphism of $G$.  Since $\Ga_r \cong
\pi_1(\ut,t_0)$ we get an epimorphism $\theta : \Ga_r \ra G$. If $m_i$
is the local monodromy around $t_i$ and $\mm=(m_1, \lds, m_r)$, then
$(\mm, G, \theta)$ is a datum.  Thus a Galois cover of $\PP^1$
branched over $t$ gives rise -- up to some choices -- to a datum. The
Riemann's existence theorem ensures that the process can be reversed:
a branch locus $t$ and a datum determine a covering of $\PP^1$ up to
isomorphism (see e.g. \cite[Sec. III, Corollary 4.10]{M95}).  We wish
to show that the process can be reversed also in families, namely that
to any datum is associated a family of Galois covers of $\PP^1$.
\end{say}

\begin{say}
  \label{family}
  In fact let $\datum$ be a datum.  Set $Y_r:=\{t=(t_1, \lds, t_r) \in
  (\PP^1)^r : t_i\neq t_j $ for $i\neq j\}$.  Fix a point $t \in Y_r$,
  a base point $t_0 \in \ut$ and an isomorphism $\Ga_r \cong
  \pi_1(\ut, t_0)$ (this is equivalent to choosing a point in the
  Teichm\"uller space $T_{0,r}$). By the above we get a $G$-cover $C_t
  \ra\PP^1$ branched at the points $t_i$ with local monodromies
  $m_1,\dots,m_r$.  This yields a monomorphism of $G$ into the mapping
  class group $ \Map_g := \pi_0 ( \Diff^+ (C_t))$.  Denote by $T_g^G$
  the fixed point locus of $G$ on the Teichm\"uller space $T_g$. It is
  a complex submanifold of dimension $r-3$, isomorphic to the
  Teichm\"uller space $T_{0,r}$ (see e.g. \cite{baffo-linceo,gavino}).
  This isomorphism can be described as follows: if $(C,\phi) $ is a
  curve with a marking such that $[(C, \phi)] \in T_g^G$, the
  corresponding point in $T_{0,r}$ is $[(C/G, \psi, b_1, \lds, b_r)]$,
  where $\psi$ is the induced marking (see \cite{gavino}) and $b_1,
  \lds, b_r$ are the critical values of the projection $C \ra C/G$.

  We remark that on $T_g^G$ we have a universal family $\mathcal{C}
  \to T_g^G$ of curves with a $G$--action. It is simply the
  restriction of the universal family on $T_g$.

  The diagonal action of $\psl(2, \C)$ on $(\PP^1)^r$ preserves
  $Y_r$. Thus $\psl(2, \C)$ acts on $Y_r$.  The map $Y_r \ra \mg$, $t
  \mapsto [C_t]$ is $\psl(2,\C)$--invariant, so we get a map $Y_r /
  \psl(2,\C) \ra \mg$.  Since $Y_r / \psl(2,\C) $ is isomorphic to
  $\mathsf{M}_{0,r}$, there is a surjective map $T_{0,r} \ra Y_r
  /\psl(2, \C)$.  Since $T_g^G \subset T_g$ there is also a map $T_g^G
  \ra \mg$ which has discrete fibres.  Recalling the description of
  the isomorphism $T_g^G \cong T_{0,r}$ one can easily check that the
  following diagram commutes:
  \begin{equation*}
    \begin{tikzcd}
      T_{0,r} \arrow{r}    & M_{0,r} \cong Y_r /\psl(2, \C) \arrow{r} & \mg.   \\
      & T_g^G \arrow{ul} {\cong} \arrow{ur} &
    \end{tikzcd}
  \end{equation*}
  We denote by $\mathsf{M}\datum$ the image of $Y_r$ in $\mg$, which
  is equal to the image of $T_g^G$. It is an irreducible algebraic
  subvariety of the same dimension as $T_g^G\cong T_{0,r}$, i.e. $r-3$
  (see e.g. \cite{baffo-linceo,gavino}).  Applying the Torelli map to
  $\mathsf{M}\datum$ one gets a subset of $\ag$.  We let
  $\zg(\mm,G,\theta)$ denote the closure of this subset in $\ag$.  By
  the above it is an algebraic subvariety of dimension $r-3$.

  Different data $\datum$ and $(\mm, G, \theta')$ may give rise to the
  same subvariety of $\mg$. This is related to the choice of the
  isomorphism $\Ga_r \cong \pi_1(U_t, t_0)$.  The change from one
  choice to another can be described using an action of the braid
  group ${\bf B_r}:=\langle \sigma_1, \ldots ,\sigma_{r}| \, \sigma_i
  \sigma_j = \sigma_j \sigma_i \, \, {\rm for } \, \, |i-j|\geq 2, \,
  \sigma_{i+1}\sigma_i\sigma_{i+1}=\sigma_i\sigma_{i+1}\sigma_i
  \rangle$.  There is a morphism $\phi : \braid \ra \Aut(\Ga_r)$
  defined as follows:
  \begin{gather*}
    \phi( \sigma_i) (\gamma_i) = \gamma_{i+1}, \quad \phi(\sigma_i)
    (\gamma_{i+1}) = \gamma_{i+1} ^{-1} \gamma_i \gamma_{i+1}, \\
    \phi(\sigma_i) (\gamma_j ) = \gamma_j \quad \text{for }j \neq i,
    i+1.
  \end{gather*}
  Thus we get an action of $\braid$ on the set of data: $ \sigma \cd
  (\mm, G, \theta) : = (\sigma (\mm), G, \theta \circ
  \phi(\sigma\meno))$, where $\sigma (\mm) $ is the permutation of
  $\mm$ induced by $\sigma$.  Also the group $\Aut(G)$ acts on the set
  of data by $\alfa \cd (\mm, G, \theta) : = (\mm, G, \alfa \circ
  \theta )$.
  The orbits of the $\braid \times \Aut(G)$--action are called
  \emph{Hurwitz equivalence classes}. Data in the same class give rise
  to the same subvariety $\mathsf{M}\datum$ and hence to the same
  subvariety $\zg\datum \subset \ag$. For more details see
  \cite{penegini2013surfaces,baffo-linceo,birman-braids}.
\end{say}

\begin{say}
  \label{say-ssg}
  Given a positive integer $m$ set $ \zeta_m = e^{2\pi i / m} $ and
  \begin{gather*}
    I(m) := \{\nu \in \Zeta: 1\leq \nu < m, \gcd(\nu, m) =1 \}.
  \end{gather*}
  If $\datum$ is a datum, set $x_i := \theta(\ga_i)$. The $r$-tuple
  $(x_1, \lds, x_r)$ is called a \emph{spherical system of
    generators}.  If $C$ is a curve with a $G$--action with datum
  $\datum$, then the cyclic subgroups $\left\langle
    x_{i}\right\rangle$ and their conjugates are the non-trivial
  stabilizers of the action of $G$ on $C$.  The action of the
  stabilizers near the fixed points can be completely described in
  terms of the epimorphism $\theta$, see \cite[Theorem 7]{Ha71}.  In
  particular we need the following results.  Suppose that an element
  $\gr \in G$ fixes a point $P \in C$.  Let $m$ be the order of $\gr$.
  The differential $d\gr_P$ acts on $T_P C$ by multiplication by an
  $m$-th root of unity $\zeta_P(\gr)$. The action can be linearized in
  a neighbourhood of $P$, i.e.
  there is a local coordinate $z$ centered in $P$, such that $\gr $
  acts as $ z \mapsto \zeta_{P}(\gr ) z$.  Thus $\zeta_P(\gr )$ is a
  primitive $m$-th root of unity.  (See also \cite[Cor. III.3.5
  p. 79]{M95}.)  Denote by $\Fix(\gr)$ the set of fixed points of
  $\gr$. For $\nu \in I(m)$ set
  \begin{gather*}
    \Fix_\nu (\gr ) : = \{P \in C : {\bf g } P = P , \zeta_P (\gr ) =
    \zeta_m^\nu\}.
  \end{gather*}

\end{say}

\begin{lemma}
  If $G \subseteq\Aut(C)$ and $\gr \in G$ has order $m$, then
  \begin{gather*}
    | \Fix_\nu (\gr ) | = | C_G(\gr ) | \cd \sum_{
      \substack{ 1 \leq i \leq      r , \\
        m | m_i ,\\
        \gr \sim_G x_i ^{m_i \nu / m}}} \frac{1}{m_i}.
  \end{gather*}
\end{lemma}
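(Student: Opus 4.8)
The plan is to count the fixed points fibrewise over the branch locus. Since every $P\in\Fix_\nu(\gr)$ has $\gr$ in its stabilizer, such a $P$ has non-trivial stabilizer and hence lies over one of the branch points $t_i$ of $f\colon C\ra\PP^1$. Thus $\Fix_\nu(\gr)$ is the disjoint union, over $i=1,\dots,r$, of the sets $F_i:=\{P\in f\meno(t_i): \gr P=P,\ \zeta_P(\gr)=\zeta_m^\nu\}$, and it suffices to show that $|F_i|=|C_G(\gr)|/m_i$ when $m\mid m_i$ and $\gr\sim_G x_i^{m_i\nu/m}$, and $|F_i|=0$ otherwise. Summing over $i$ then gives the stated formula.

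Next I would describe the fibre over $t_i$, its stabilizers, and the local action. By the construction recalled in \S\ref{theo: Riemann} one may choose a point $P_i\in f\meno(t_i)$ whose stabilizer is exactly $\langle x_i\rangle$, with $x_i=\theta(\ga_i)$. Since $G$ acts transitively on $f\meno(t_i)$, the map $g\langle x_i\rangle\mapsto gP_i$ identifies $f\meno(t_i)$ with $G/\langle x_i\rangle$, and the stabilizer of $gP_i$ is $g\langle x_i\rangle g\meno$. The crucial input is the local linearization recalled in \S\ref{say-ssg} (i.e. \cite[Theorem 7]{Ha71}): with the normalization coming from Riemann's existence theorem one has $\zeta_{P_i}(x_i)=\zeta_{m_i}$, so that $\zeta_{P_i}(x_i^{m_i\nu/m})=\zeta_{m_i}^{m_i\nu/m}=\zeta_m^\nu$ and $x_i^{m_i\nu/m}$ is the unique element of $\langle x_i\rangle$ acting on $T_{P_i}C$ by $\zeta_m^\nu$. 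Transporting this by the biholomorphism $g$, which conjugates the $G$-action, the two conditions $\gr(gP_i)=gP_i$ and $\zeta_{gP_i}(\gr)=\zeta_m^\nu$ collapse into the single group-theoretic condition $g\meno\gr g=x_i^{m_i\nu/m}$.

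It then remains to count the cosets $g\langle x_i\rangle$ satisfying $g\meno\gr g=x_i^{m_i\nu/m}$. First, since $\langle x_i\rangle$ is abelian and $x_i^{m_i\nu/m}\in\langle x_i\rangle$, replacing $g$ by $gx_i^s$ leaves the condition unchanged, so it is well defined on cosets. If no such $g$ exists then $F_i=\vacuo$; otherwise the condition forces $\gr\sim_G x_i^{m_i\nu/m}$, in particular $m\mid m_i$ (since $\gcd(\nu,m)=1$), which is precisely the index set of the sum. Assuming $\gr\sim_G x_i^{m_i\nu/m}$, fix one $g_0$ with $g_0\meno\gr g_0=x_i^{m_i\nu/m}=:y$. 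A direct computation shows $g\meno\gr g=y$ if and only if $gg_0\meno\in C_G(\gr)$, i.e. $g\in C_G(\gr)g_0$. To pass from elements to cosets I would use the key observation that the cyclic group $g_0\langle x_i\rangle g_0\meno$ contains $\gr$ and, being abelian, is contained in $C_G(\gr)$; hence the number of distinct cosets $g\langle x_i\rangle$ with $g\in C_G(\gr)g_0$ equals the index $[C_G(\gr):g_0\langle x_i\rangle g_0\meno]=|C_G(\gr)|/m_i$. This gives $|F_i|=|C_G(\gr)|/m_i$, as required.

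The only genuinely non-formal step is the determination of the local action $\zeta_{P_i}(x_i)=\zeta_{m_i}$, i.e. matching the generator $\theta(\ga_i)$ of the stabilizer to the rotation number of the cover at a point over $t_i$; this is exactly the content of Harvey's theorem quoted in \S\ref{say-ssg}, and it is what pins down the exponent $m_i\nu/m$ in the statement. Everything else is elementary bookkeeping with cosets together with the orbit--stabilizer theorem.
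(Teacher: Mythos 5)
Your proof is correct. There is in fact no proof to compare it against: the paper states the lemma and simply remarks that it ``follows from \cite[Theorem 7]{Ha71}, see also \cite[Lemma 11.5]{breuer}'', so your write-up supplies the standard derivation contained in those references. The structure is exactly the expected one: decompose $\Fix_\nu(\gr)$ over the branch points, identify the fibre over $t_i$ with $G/\sx x_i \xs$ via a base point $P_i$ whose stabilizer is $\sx x_i\xs$, and count cosets. You correctly isolate the single non-formal input, namely the normalization $\zeta_{P_i}(x_i)=\zeta_{m_i}$, which is precisely the content of Harvey's theorem as recalled in \S\ref{say-ssg}; given that, the injectivity of $h\mapsto \zeta_{P_i}(h)$ on the stabilizer collapses the two conditions into $g\meno\gr g=x_i^{m_i\nu/m}$ (forcing $m\mid m_i$ and $\gr\sim_G x_i^{m_i\nu/m}$ whenever the fibre contributes), and your observation that $g_0\sx x_i\xs g_0\meno$ is an abelian group containing $\gr$, hence lies in $C_G(\gr)$, gives the count $[C_G(\gr):g_0\sx x_i\xs g_0\meno]=|C_G(\gr)|/m_i$ per contributing branch point. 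Summing over $i$ yields the stated formula.
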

(Here $C_G(\gr)$ denotes the centralizer of $\gr$ in $G$ and $\sim_G$
denotes the equivalence relation given by conjugation in $G$.)  This
lemma follows from \cite[Theorem 7]{Ha71}, see also \cite[Lemma
11.5]{breuer}.

\begin{say}\label{say_rep}
  Given a $G$-Galois cover $C \rightarrow \PP^1$ let $ \rho\colon G
  \longrightarrow {\rm GL}(H^0(C, K_C)) $ denote the representation on
  holomorphic 1-forms and let $\chi_\rho$ be the character of
  $\rho$. Notice that up to equivalence the representation $\rho$ only
  depends on the data $(\mm, G, \theta)$, not on the parameter $t\in
  Y_r$.
\end{say}

\begin{teo}
[Eichler Trace Formula]
Let $\gr $ be an automorphism of order $m>1$ of a Riemann surface $C$
of genus $g>1$. Then
\begin{equation}\label{eq_EichlerFormula}
  \chi_\rho(\gr )=
  \spur(\rho({\gr}))=1+\sum_{P\in \Fix(\gr)}\frac{\zeta_P(\gr)}{1-\zeta_P(\gr)}.
\end{equation}  
\end{teo}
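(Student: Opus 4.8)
The plan is to deduce the formula from the holomorphic Lefschetz fixed point theorem of Atiyah and Bott, applied to the biholomorphism $\gr\colon C\ra C$ and the Dolbeault complex of the structure sheaf $\OO_C$. First observe that, since $\gr$ has finite order $m>1$, its fixed locus $\Fix(\gr)$ is finite: an automorphism fixing a non-discrete set would be the identity by the identity principle. Moreover, as recalled in \ref{say-ssg}, in a suitable local coordinate $\gr$ acts as $z\mapsto \zeta_P(\gr)\,z$ with $\zeta_P(\gr)$ a primitive $m$-th root of unity; in particular $\zeta_P(\gr)\neq 1$, so each fixed point is nondegenerate and the Atiyah--Bott formula applies. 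On a curve the holomorphic tangent space at $P$ is one-dimensional and $d\gr_P$ is multiplication by $\zeta_P(\gr)$, so the local contribution at $P$ is $1/\bigl(1-\zeta_P(\gr)\bigr)$, giving
\begin{equation*}
  \spur\bigl(\gr^*|_{H^0(C,\OO_C)}\bigr)-\spur\bigl(\gr^*|_{H^1(C,\OO_C)}\bigr)=\sum_{P\in\Fix(\gr)}\frac{1}{1-\zeta_P(\gr)}.
\end{equation*}

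Next I would evaluate the two cohomology traces. Since $H^0(C,\OO_C)=\C$ consists of the constants, on which $\gr^*$ acts trivially, the first trace equals $1$. For the second I use the Hodge/Dolbeault identification $H^1(C,\OO_C)\cong H^{0,1}(C)=\overline{H^{1,0}(C)}=\overline{H^0(C,K_C)}$. As $\gr$ is holomorphic, pullback commutes with complex conjugation of forms, $\gr^*\overline{\om}=\overline{\gr^*\om}$, so the action of $\gr^*$ on $H^{0,1}$ is the complex conjugate of its action on $H^{1,0}=H^0(C,K_C)$; hence $\spur\bigl(\gr^*|_{H^1(C,\OO_C)}\bigr)=\overline{\chi_\rho(\gr)}$. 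Substituting yields
\begin{equation*}
  1-\overline{\chi_\rho(\gr)}=\sum_{P\in\Fix(\gr)}\frac{1}{1-\zeta_P(\gr)}.
\end{equation*}

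Finally I would take complex conjugates of both sides. Since each $\zeta_P(\gr)$ is a root of unity, $\overline{\zeta_P(\gr)}=\zeta_P(\gr)\meno$ and
\begin{equation*}
  \overline{\frac{1}{1-\zeta_P(\gr)}}=\frac{1}{1-\zeta_P(\gr)\meno}=\frac{-\zeta_P(\gr)}{1-\zeta_P(\gr)},
\end{equation*}
so conjugating the previous identity gives
\begin{equation*}
  1-\chi_\rho(\gr)=-\sum_{P\in\Fix(\gr)}\frac{\zeta_P(\gr)}{1-\zeta_P(\gr)},
\end{equation*}
which rearranges to the asserted formula \eqref{eq_EichlerFormula}.

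The step that must be handled with care is the identification of the $\gr$-action on $H^1(C,\OO_C)$: one has to be sure that $\spur\bigl(\gr^*|_{H^1(C,\OO_C)}\bigr)$ is the complex \emph{conjugate} of $\chi_\rho(\gr)$ rather than $\chi_\rho(\gr)$ itself, since a misplaced conjugation (equivalently, confusing $\gr$ with $\gr\meno$, or $\rho$ with its contragredient) flips the sign of the fixed-point sum. The conjugation-of-forms argument above is the safest way to fix this, as it is insensitive to whether one sets $\rho(\gr)=\gr^*$ or $\rho(\gr)=(\gr\meno)^*$. One should also confirm the shape and sign of the local Atiyah--Bott term; the toy model $\gr\colon[z_0:z_1]\mapsto[z_0:\la z_1]$ on $\PP^1$, where both sides equal $1$, provides a convenient consistency check. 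As an alternative avoiding the Lefschetz theorem, one could instead read off $\chi_\rho(\gr)$ from the Chevalley--Weil description of the $\langle\gr\rangle$-representation on $H^0(C,K_C)$ attached to the branched cover $C\ra C/\langle\gr\rangle$, but the argument above is shorter.
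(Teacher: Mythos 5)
The paper offers no proof of this statement to compare against: it is quoted from the literature with the pointer to \cite[Thm.~V.2.9]{FK}, where the argument is the classical one via explicitly constructed meromorphic functions and differentials, Riemann--Roch and residue computations. Your route through the Atiyah--Bott holomorphic Lefschetz fixed point formula for the Dolbeault complex of $\OO_C$ is the standard modern alternative, and the steps all check out: the fixed locus is finite, each fixed point is nondegenerate because $\zeta_P(\gr)$ is a primitive $m$-th root of unity with $m>1$, the local term is $1/(1-\zeta_P(\gr))$, the trace on $H^0(C,\OO_C)=\C$ is $1$, the trace on $H^1(C,\OO_C)\cong \overline{H^0(C,K_C)}$ is the conjugate of $\spur(\gr^*|_{H^0(C,K_C)})$ by the conjugation-of-forms argument, and the final conjugation and rearrangement are done correctly. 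What each approach buys: yours is short and conceptual but imports the index theorem; the textbook proof is elementary and self-contained, which is presumably why the authors simply cite it.

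One correction to your closing remark: the argument is \emph{not} insensitive to the choice between $\rho(\gr)=\gr^*$ and $\rho(\gr)=(\gr\meno)^*$. These conventions give complex-conjugate values of $\chi_\rho(\gr)$, and with $\zeta_P(\gr)$ fixed as in \ref{say-ssg} (the eigenvalue of $d\gr_P$ on $T_PC$) exactly one of them, namely $\chi_\rho(\gr)=\spur(\gr^*|_{H^0(C,K_C)})$, makes \eqref{eq_EichlerFormula} literally true; under the other convention the right-hand side must be conjugated, i.e. $\zeta_P(\gr)$ replaced by $\zeta_P(\gr)\meno=\zeta_P(\gr\meno)$. Your derivation implicitly, and correctly, uses the first pairing, so the end result is fine, but the claimed robustness is illusory. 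Neither your $\PP^1$ toy model nor the hyperelliptic involution can detect this, since in both cases everything in sight is real; a test that does is $y^2=x^5-1$ with $(x,y)\mapsto(\zeta_5x,y)$, where $\spur(\gr^*)=\zeta_5+\zeta_5^2$ has nonzero imaginary part. For the uses the paper makes of the formula (computing $N$ and detecting hyperelliptic involutions) only $\Re\chi_\rho$ and $|\chi_\rho|^2$ enter, so the ambiguity is harmless there.
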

(See e.g.  \cite[Thm. V.2.9, p. 264]{FK}.)  Collecting the terms with
equal exponent and using the previous lemma one gets the following.
\begin{cor}
  \begin{equation}
    \label{carg}
    \chi_\rho(\gr )=
    1+
    |C_G(\gr)| 
    \sum_{\nu \in I(m)} 
    \biggl\{\sum_{\substack{
        1 \leq i \leq      r , \\
        m | m_i ,\\
        \gr \sim_G x_i ^{m_i \nu / m}}}
    \frac{1}{m_i}  \biggl \} \frac{\zeta_m^\nu}{1-\zeta_m^\nu}.
  \end{equation}  
\end{cor}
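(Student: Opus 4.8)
The plan is to start from the Eichler Trace Formula in its original form \eqref{eq_EichlerFormula} and reorganize the sum over fixed points into a sum over root-of-unity classes. The observation is that the quantity $\frac{\zeta_P(\gr)}{1-\zeta_P(\gr)}$ depends on $P$ only through the value $\zeta_P(\gr)$, which by \ref{say-ssg} is always a primitive $m$-th root of unity $\zeta_m^\nu$ with $\nu \in I(m)$. Hence I would group the fixed points according to the value of $\nu$, writing
\begin{equation*}
  \sum_{P\in \Fix(\gr)}\frac{\zeta_P(\gr)}{1-\zeta_P(\gr)}
  = \sum_{\nu \in I(m)} |\Fix_\nu(\gr)| \cdot \frac{\zeta_m^\nu}{1-\zeta_m^\nu},
\end{equation*}
since $\Fix(\gr)$ is the disjoint union of the sets $\Fix_\nu(\gr)$ over $\nu \in I(m)$, and on each such set the summand is the constant $\frac{\zeta_m^\nu}{1-\zeta_m^\nu}$.

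Next I would substitute the explicit value of $|\Fix_\nu(\gr)|$ furnished by the preceding Lemma. That lemma gives
\begin{equation*}
  |\Fix_\nu(\gr)| = |C_G(\gr)| \cdot \sum_{\substack{1 \leq i \leq r,\\ m \mid m_i,\\ \gr \sim_G x_i^{m_i \nu/m}}} \frac{1}{m_i}.
\end{equation*}
Plugging this in and pulling the common factor $|C_G(\gr)|$ out of the outer sum yields exactly the claimed formula \eqref{carg}. The factor $1$ in front simply carries over unchanged from the Eichler formula.

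The only genuine point requiring care is the justification of the grouping, namely that every fixed point $P$ of $\gr$ has $\zeta_P(\gr)$ equal to a \emph{primitive} $m$-th root of unity, so that the index set is precisely $I(m)$ and no other roots of unity contribute. This is guaranteed by the linearization discussion in \ref{say-ssg}: since $\gr$ has order exactly $m$ and acts in suitable local coordinates as $z \mapsto \zeta_P(\gr) z$, the value $\zeta_P(\gr)$ must be a primitive $m$-th root of unity. I expect this to be the main (and in fact only) conceptual obstacle; once it is in place the corollary is a purely formal rewriting, combining the trace formula with the counting lemma. No separate convergence or analytic issue arises because the sum over $\Fix(\gr)$ is finite.
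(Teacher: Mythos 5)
Your proof is correct and follows exactly the paper's (very brief) argument: the paper simply says ``collecting the terms with equal exponent and using the previous lemma,'' which is precisely your decomposition of $\Fix(\gr)$ into the sets $\Fix_\nu(\gr)$ for $\nu \in I(m)$ followed by substitution of the counting lemma. Your additional remark that $\zeta_P(\gr)$ is always a \emph{primitive} $m$-th root of unity, so the index set is exactly $I(m)$, is the right point to flag and is already guaranteed by the linearization discussion in \ref{say-ssg}.
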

\begin{say}
  Another corollary of the Eichler Trace Formula is the well known
  Chevalley--Weil formula which gives the multiplicity of a given
  irreducible representation of $G$ in $H^0(X, K_C)$. More precisely,
  denote by $\operatorname{Irr}(G)$ the set of irreducible characters
  of $G$.  For $\chi \in \operatorname{Irr}(G)$ let $\sigma_\chi$ be
  the corresponding irreducible representation and let $d_{\chi}$ be
  the degree of $\sigma_{\chi}$.  Next, denote by $\mu_{\chi}$ the
  multiplicity of $\sigma_\chi$ inside $\rho$.  Moreover, let $x_i$ be
  an element of order $m_i$ in $G$ that represents the local monodromy
  of the covering $C \ra \PP^1$ at the branch point $P_i$ and let
  $E_{i,\alpha}$ denote the number of eigenvalues of
  $\sigma_{\chi}(x_i)$ that are equal to $\zeta^{\alpha}_{m_i}$, where
  $\zeta_{m_i} = e^{2\pi i / m_i} $ as usual.
\end{say}
\begin{teo}[Chevalley--Weil \cite{CW}]
  Let $ C \rightarrow \PP^1$ be a $G$-Galois cover branched at $r$
  points. Let $m_i$ and $E_{i,\alpha}$ be as above. Then the
  multiplicity $\mu_{\chi}$ of a given irreducible character $\chi$ in
  $H^0(C, K_C)$ is
  \begin{equation}\label{eq_ChevWeilFormula}
    \mu_{\chi} = -d_{\chi} +\sum^r_{i=1} \sum^{m_i-1}_{\alpha=0} E_{i,\alpha}\Bigl \langle -\frac{\alpha}{m_i} \Bigr \rangle
    + \eps,
  \end{equation}
  where $\eps=1$ if $\chi$ is the trivial character and $\eps=0$
  otherwise. Here we denote by $\langle q \rangle$ the fractional part
  of $q\in \mathbb{Q}$.
\end{teo}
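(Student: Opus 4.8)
The plan is to read off the multiplicity from the character of $\rho$ by orthogonality and then substitute the Eichler Trace Formula conjugacy class by conjugacy class. Concretely, $\mu_\chi=\frac{1}{|G|}\sum_{\gr\in G}\chi_\rho(\gr)\,\overline{\chi(\gr)}$, and the first step is to isolate the identity term. Since $\chi_\rho(\id)=\dim H^0(C,K_C)=g$ and $\chi(\id)=d_\chi$, it contributes $g\,d_\chi/|G|$. For every $\gr\neq\id$ the Eichler Trace Formula \eqref{eq_EichlerFormula} gives $\chi_\rho(\gr)=1+\sum_{P\in\Fix(\gr)}\zeta_P(\gr)/(1-\zeta_P(\gr))$, so the sum over $\gr\neq\id$ breaks into a constant part coming from the $1$'s and a fixed point part. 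The constant part is $\frac{1}{|G|}\sum_{\gr\neq\id}\overline{\chi(\gr)}$, which by orthogonality of $\chi$ with the trivial character equals $\eps-d_\chi/|G|$; this already produces the term $\eps$ of the formula.

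For the fixed point part I would use \eqref{carg}, i.e. the Lemma computing $|\Fix_\nu(\gr)|$. Grouping the elements $\gr\neq\id$ by conjugacy class, the factor $|C_G(\gr)|/|G|$ cancels against the cardinality $|G|/|C_G(\gr)|$ of the class, and reorganizing the result by branch point turns it into a sum over the powers of the local monodromies:
\[
\frac{1}{|G|}\sum_{\gr\neq\id}\overline{\chi(\gr)}\sum_{P\in\Fix(\gr)}\frac{\zeta_P(\gr)}{1-\zeta_P(\gr)}=\sum_{i=1}^{r}\frac{1}{m_i}\sum_{k=1}^{m_i-1}\overline{\chi(x_i^{k})}\,\frac{\zeta_{m_i}^{k}}{1-\zeta_{m_i}^{k}}.
\]
Geometrically this is the statement that each of the $|G|/m_i$ points in the fibre over the $i$-th branch point contributes equally, the stabilisers being cyclic of order $m_i$ and conjugate to $\sx x_i\xs$, on which the class function $\chi$ is constant. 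Writing $\overline{\chi(x_i^{k})}=\sum_{\alpha}E_{i,\alpha}\,\zeta_{m_i}^{-\alpha k}$ by the definition of $E_{i,\alpha}$, the inner sum becomes $\sum_{\alpha}E_{i,\alpha}\,T_{m_i}(1-\alpha)$, where $T_m(a):=\frac{1}{m}\sum_{k=1}^{m-1}\zeta_m^{ak}/(1-\zeta_m^{k})$.

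The crux is then a purely arithmetic lemma evaluating $T_m(a)$. I would prove it through the one-line recursion
\[
\sum_{k=1}^{m-1}\frac{\zeta_m^{ak}}{1-\zeta_m^{k}}-\sum_{k=1}^{m-1}\frac{\zeta_m^{(a-1)k}}{1-\zeta_m^{k}}=-\sum_{k=1}^{m-1}\zeta_m^{(a-1)k},
\]
whose right-hand side equals $m-1$ if $m\mid(a-1)$ and $-1$ otherwise; combined with the value $\frac{m-1}{2}$ at $a=0$ this determines $T_m$ in closed form, and after reducing $1-\alpha$ modulo $m$ one finds $T_{m}(1-\alpha)=\langle-\alpha/m\rangle-\frac{m-1}{2m}$ for every $\alpha$. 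Substituting and using $\sum_\alpha E_{i,\alpha}=d_\chi$, the fixed point part equals $\sum_{i,\alpha}E_{i,\alpha}\langle-\alpha/m_i\rangle-\frac{d_\chi}{2}\bigl(r-\sum_i 1/m_i\bigr)$. Collecting the identity term, the constant term and this last expression, the desired formula $\mu_\chi=-d_\chi+\sum_{i,\alpha}E_{i,\alpha}\langle-\alpha/m_i\rangle+\eps$ follows exactly when
\[
\frac{g\,d_\chi}{|G|}-\frac{d_\chi}{|G|}-\frac{d_\chi}{2}\Bigl(r-\sum_i\frac{1}{m_i}\Bigr)=-d_\chi,
\]
which, upon dividing by $d_\chi$, is precisely the Riemann--Hurwitz identity $2(g-1)/|G|=(r-2)-\sum_i 1/m_i$ for the Galois cover $C\to\PP^1=C/G$. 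The main obstacle is not conceptual but lies in the careful bookkeeping of these several constant terms: one must check that the normalisations hidden in the Eichler formula, in the orthogonality relation and in the arithmetic lemma conspire into the single Euler-characteristic identity above, which is what ultimately pins down the $-d_\chi$ in the statement.
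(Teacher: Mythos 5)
Your derivation is correct and follows exactly the route the paper indicates but does not write out: the paper only cites \cite{CW} and remarks that the formula is ``another corollary of the Eichler Trace Formula,'' and your argument --- orthogonality, splitting off the identity and the constant $1$'s, regrouping the fixed-point sum by branch points via the conjugacy of stabilizers to $\langle x_i\rangle$, the evaluation of $T_m(1-\alpha)=\langle -\alpha/m\rangle-\tfrac{m-1}{2m}$, and the final cancellation via Riemann--Hurwitz --- is the standard way to make that remark precise, and all the constants check out. The only blemish is a sign slip in the prose describing the recursion: the right-hand side $-\sum_{k=1}^{m-1}\zeta_m^{(a-1)k}$ equals $-(m-1)$ when $m\mid(a-1)$ and $+1$ otherwise (you stated the values of the sum without the leading minus); the closed form you deduce for $T_m$ is nevertheless the correct one.
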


\begin{say}
  Let $\sigma\colon G \rightarrow {\rm GL}(V)$ be any linear
  representation of $G$ with character $\chi_\sigma$. Denote by
  $S^2\sigma$ the induced representation on $S^2 V$ and by
  $\chi_{S^2\sigma}$ its character.  Then for $x \in G$
  \begin{equation}\label{eq_symchar}
    \chi_{S^2\sigma}  
    (x)=\frac{1}{2}\bigl (\chi_\sigma(x)^2+\chi_\sigma(x^2)\bigr).
  \end{equation}
  (See e.g. \cite[Proposition 3]{S}).
\end{say}

\begin{say}
  We are only interested in the multiplicity $N$ of the trivial
  representation inside $S^2\rho$. We remark that since the
  representation $\rho$ only depends on the datum $(\mm, G, \theta)$,
  the same happens for $N$.  Using the orthogonality relations and
  \eqref{eq_symchar}, $N$ can be computed as follows:
  \begin{gather}
    \label{N}
    N= (\chi_{S^2\rho}, 1) = \frac{1}{|G|} \sum_{x \in G }
    \chi_{S^2\rho}(x) = \frac{1}{2|G|} \sum_{x \in G } \bigl (
    \chi_\rho(x^2) + \chi_\rho(x)^2 \bigr).
  \end{gather}
  Since $\chi_\rho = \sum_{\chi \in \operatorname{Irr}(G)} \mu_\chi
  \chi$ we obtain
  \begin{equation}
    \label{NCW}
    N=\frac{1}{2|G|}\sum_{x \in G}
    \Big(\big(\sum_{\chi \in \operatorname{Irr}(G)}\mu_{\chi}\chi(x)\big)^2+
    \sum_{\chi \in \operatorname{Irr}(G)}\mu_{\chi}\chi(x^2)\Big)
  \end{equation}
  where $ \operatorname{Irr}(G) $ denotes the set of irreducible
  characters of $G$.  Formula \eqref{NCW} is the one used in our
  \verb|MAGMA| script. To computed directly the examples by hand, one
  can use \eqref{N} together with \eqref{carg}.  This is the method
  used in the computation at the end of \S \ref{examples-section}.

\end{say}

\section{Special subvarieties}
\label{Shimura-section}

\begin{say}
  \label{VHS}
  Fix a rank $2g$ lattice $\La$ and an alternating form $E : \La
  \times \La \ra \Zeta$ of type $(1,\lds, 1)$.  For $F$ a field with
  $\QQ \subseteq F \subseteq \C$, set $\La_F : = \La\otimes_\Zeta F$.
  The Siegel upper half-space can be defined as follows
  \cite[Thm. 7.4]{kempf-abelian-theta}:
  \begin{gather*}
    \sieg:= \{J \in \GL (\La_ \R) : J^2 = - I, J^* E = E, E(x,Jx) >0,
    \ \forall x \neq 0 \}.
  \end{gather*}
  The group $\Sp(\La, E)$ acts on $\sieg$ by conjugation and $\ag =
  \Sp(\La, E) \backslash \sieg $.  This space has the structure of
  a smooth algebraic stack and also of a complex analytic orbifold. The
  orbifold structure is the one naturally associated with the properly
  discontinuous action of $\Sp(\La, E) $ on $\sieg$.  Throughout the
  paper we will work with $\ag$ with this orbifold structure.  Denote
  by $A_J$ the quotient $\La_ \R / \La$ provided with the complex
  structure $J$ and the polarization $E$.  On $\sieg$ there is a
  natural variation of rational Hodge structure, with local system
  $\sieg \times \La _ \QQ$ and corresponding to the Hodge
  decomposition of $\La_ \C$ in $\pm i$ eigenspaces for $J$.  This
  descends to a variation of Hodge structure on $\ag$ in the orbifold
  or stack sense.
\end{say}

\begin{say}
  We refer to \S 2.3 in \cite{moonen-oort} for the definition of Hodge
  loci for a variation of Hodge structure.  A \emph{special
    subvariety} $\zg \subseteq\ag$ is by definition a Hodge locus of
  the natural variation of Hodge structure on $\ag$ described above.
  Special subvarieties contain a dense set of CM points and they are
  totally geodesic \cite[\S 3.4(b)]{moonen-oort}. Conversely an
  algebraic totally geodesic subvariety that contains a CM point is a
  special subvariety \cite[Thm. 4.3]{moonen-linearity-1}.  The
  simplest special subvarieties are the \emph{special subvarieties of
    PEL type}, whose definition is as follows (see \cite[\S
  3.9]{moonen-oort} for more details). 
  Given $J\in \sieg$, set
  \begin{gather}
    \label{endq}
    \End_\QQ (A_{J}) := \{f\in \End_\QQ(\La_ \QQ): Jf=fJ\}.
  \end{gather}
  Fix a point $J_0 \in \sieg$ and set $D:= \End_\QQ (A_{J_0})$.  The
  \emph{PEL type} special subvariety $\zg (D)$ is defined as
  the image in $\ag$ of the connected component of the set $\{J \in
  \sieg: D \subseteq\End_\QQ(A_J)\}$ that contains $J_0$.
\end{say}

  \begin{lemma}\label{Cartan}
    Let $(M, g)$ be a Riemannian symmetric space of the noncompact
    type. Let $G$ be a group acting isometrically on $(M,g)$. If $M^G$
    is nonempty, then it is a smooth connected submanifold of $M$.
  \end{lemma}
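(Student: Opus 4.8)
The plan is to establish two facts separately: that $M^G$ is locally a smooth totally geodesic submanifold, which holds for the fixed locus of any group of isometries, and that it is connected, which is where the hypothesis of noncompact type enters decisively. For the local structure, fix $p\in M^G$. Each $\phi\in G$ is an isometry fixing $p$, so $d\phi_p$ is a linear isometry of $T_pM$, and isometries intertwine the exponential map: $\phi\circ\exp_p=\exp_{\phi(p)}\circ\, d\phi_p=\exp_p\circ\, d\phi_p$. Choosing a normal neighbourhood $W=\exp_p(B)$ on which $\exp_p$ is a diffeomorphism, a point $\exp_p(v)\in W$ is fixed by $\phi$ if and only if $\exp_p(d\phi_p v)=\exp_p(v)$, i.e. if and only if $d\phi_p v=v$. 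Setting $V:=\bigcap_{\phi\in G}\ker(d\phi_p-\id)\subseteq T_pM$, I then get $M^G\cap W=\exp_p(V\cap B)$, a totally geodesic submanifold through $p$. Since $p$ is arbitrary, $M^G$ is a smooth totally geodesic submanifold, a priori with pieces of possibly varying dimension.

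Next I would prove connectedness. Since $M$ is of the noncompact type, it has nonpositive sectional curvature and is simply connected, so by the Cartan--Hadamard theorem it is a Hadamard manifold: any two points are joined by a \emph{unique} geodesic and each $\exp_p$ is a global diffeomorphism. Given $p,q\in M^G$, let $\gamma$ be the unique geodesic from $p$ to $q$. For every $\phi\in G$, the curve $\phi\circ\gamma$ is again a geodesic, now running from $\phi(p)=p$ to $\phi(q)=q$; by uniqueness $\phi\circ\gamma=\gamma$, so each point of $\gamma$ is fixed by $\phi$. As $\phi$ is arbitrary, $\gamma\subset M^G$. Hence $M^G$ is geodesically convex, and in particular path-connected.

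To conclude that $M^G$ is a genuine submanifold I would check that its local dimension is constant: for $p,q\in M^G$ joined by a geodesic $\gamma\subset M^G$, parallel transport along $\gamma$ commutes with each $d\phi$ (as $\phi$ is an isometry fixing $\gamma$ pointwise) and therefore carries $V_p$ isomorphically onto $V_q$, so $\dim V_p=\dim V_q$. Combined with connectedness and the local description above, this shows $M^G$ is a connected totally geodesic submanifold. I expect the connectedness step to be the main obstacle, and it is exactly there that the noncompact type hypothesis is indispensable: for compact or general symmetric spaces the fixed-point set of a group of isometries may be disconnected or have components of differing dimension, whereas the Hadamard property removes both pathologies simultaneously through the uniqueness of geodesics.
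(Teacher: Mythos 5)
Your proof is correct, but it is organized quite differently from the paper's, which is a one-step global argument: fix a single point $x\in M^G$, note that $\exp_x\colon T_xM\to M$ is a \emph{global} diffeomorphism (Cartan--Hadamard) and is $G$-equivariant for the isotropy action, and conclude at once that $M^G=\exp_x\bigl((T_xM)^G\bigr)$ is the diffeomorphic image of the linear subspace $(T_xM)^G$ --- smoothness, connectedness and constancy of dimension all fall out simultaneously. You instead work locally at every point of $M^G$ (normal neighbourhoods give $M^G\cap W=\exp_p(V_p\cap B)$), then prove connectedness separately via uniqueness of geodesics, and then need a third step (parallel transport along a fixed geodesic intertwines the isotropy representations at its endpoints) to see that the local dimension is constant. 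Each of your three steps is sound: the normal-neighbourhood computation is the standard local description of fixed-point sets of isometries, the geodesic-convexity argument correctly isolates where the noncompact-type hypothesis enters, and the parallel-transport argument does carry $V_p$ onto $V_q$ since $\phi\circ\gamma=\gamma$ gives $P_\gamma\circ d\phi_p=d\phi_q\circ P_\gamma$. What your route buys is extra information and generality --- you get total geodesy and geodesic convexity of $M^G$ explicitly, and your local analysis is valid on any Riemannian manifold, with the Hadamard property quarantined in the convexity step; what the paper's route buys is brevity, since basing the exponential map at a point that is already fixed makes the equivariance global and renders the connectedness and constant-dimension discussions unnecessary.
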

  \begin{proof}
    Fix $x\in M^G$.  Then $G$ acts on $T_xM $ via the differential
    (isotropy action).  The exponential map $\exp_x : T_xM \ra M$ is a
    global diffeomorphism and it is $G$-equivariant with respect to
    the isotropy action on $T_xM$ and the natural action on $M$. Thus
    $M^G = \exp_x( (T_xM)^G)$. Since $(T_xM)^G$ is a linear subspace
    of $T_xM$, $M^G$ is a smooth connected submanifold.
  \end{proof}
  \begin{remark}
    If $G$ is finite, then $M^G$ is always nonempty (Cartan fixed
    point theorem), see \cite[p. 21]{eberlein-libro}.
  \end{remark}
  \begin{cor}
    \label{bert}
    Let $G\subseteq\Sp(\La, E)$ be a finite subgroup. Denote by
    $\sieg^G$ the set of points of $\sieg$ that are fixed by $G$. Then
    $\sieg^G$ is a connected complex submanifold of $\sieg$.
  \end{cor}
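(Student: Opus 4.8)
The plan is to obtain this as a direct consequence of Lemma \ref{Cartan}, after checking that its hypotheses apply to $M = \sieg$. With the symmetric metric, $\sieg$ is a Riemannian symmetric space of the noncompact type: the full symplectic group $\Sp(\La_\R, E) \cong \Sp(2g, \R)$ acts transitively on $\sieg$ by conjugation $J \mapsto g J g\meno$, this action is by isometries, and the stabilizer of a point is a maximal compact subgroup isomorphic to $\U(g)$, so that $\sieg \cong \Sp(2g, \R)/\U(g)$. Since $G \subseteq \Sp(\La, E) \subseteq \Sp(\La_\R, E)$ is a finite subgroup, it acts on $\sieg$ by isometries, and by the Cartan fixed point theorem recalled in the Remark following Lemma \ref{Cartan} the fixed locus $\sieg^G$ is nonempty. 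Lemma \ref{Cartan} then gives at once that $\sieg^G$ is a smooth connected (real) submanifold of $\sieg$. It remains only to promote it to a \emph{complex} submanifold.

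For this I would use that the action is not merely isometric but holomorphic: in the model above each $g \in \Sp(\La_\R,E)$ preserves the complex structure of $\sieg$. Fix $x \in \sieg^G$; as in the proof of Lemma \ref{Cartan} the tangent space of $\sieg^G$ at $x$ is the fixed subspace $(T_x\sieg)^G$ of the isotropy representation. Because each $g \in G$ acts holomorphically, its differential at $x$ commutes with the complex structure $\mathcal I$ of $\sieg$, so $(T_x\sieg)^G$ is $\mathcal I$-invariant, i.e.\ a complex subspace of $T_x \sieg$. Moreover $\sieg^G$, being the fixed locus of a group of isometries of a symmetric space, is totally geodesic; indeed the identity $\sieg^G = \exp_x((T_x\sieg)^G)$ from the proof of Lemma \ref{Cartan} already exhibits it as the image under $\exp_x$ of a Lie triple system. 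Since $\sieg$ is Kähler, $\mathcal I$ is parallel, and along a totally geodesic submanifold the tangent distribution is preserved by parallel transport; hence the $\mathcal I$-invariance of the tangent space at the single point $x$ propagates to every point of $\sieg^G$. A connected real submanifold of a complex manifold whose tangent spaces are all invariant under the ambient complex structure is a complex submanifold, and this finishes the proof.

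The step I expect to be the only real difficulty is this last upgrade from a real to a complex submanifold, since Lemma \ref{Cartan} already supplies smoothness and connectedness. Within it, the delicate point is that $\mathcal I$-invariance of the tangent space must be known at \emph{every} point of $\sieg^G$, not just at the chosen fixed point $x$; holomorphicity of the $G$-action only gives it at $x$, and one genuinely needs the total geodesy of $\sieg^G$ together with the parallelism of $\mathcal I$ (the Kähler condition) to spread it across the whole submanifold. An alternative, avoiding curvature considerations, would be to write $\sieg^G = \bigcap_{g \in G} \Fix(g)$ as a finite intersection of fixed loci of holomorphic maps, hence a complex analytic subset, and to invoke the fact that a complex analytic set which is a smooth real submanifold is automatically a complex submanifold.
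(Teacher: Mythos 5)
Your proposal is correct and follows exactly the paper's route: the paper's proof of Corollary \ref{bert} is the single sentence that it ``follows from the Lemma since $\sieg$ is a symmetric space of the noncompact type and $\Sp(\La, E)$ acts isometrically and holomorphically on $\sieg$.'' You have merely filled in the details that the paper leaves implicit, in particular the upgrade from a smooth connected real submanifold to a complex one via the holomorphicity of the action, total geodesy of the fixed locus and parallelism of the complex structure, which is a correct (and welcome) elaboration of the same argument.
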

  \begin{proof}
    This follows from the Lemma since $\sieg$ is a symmetric space of
    the noncompact type and $\Sp(\La, E)$ acts isometrically and
    holomorphically on $\sieg$.
  \end{proof}
  Another proof of the Corollary follows from \cite[Lemma
  5.2]{van-geemen-1515}.  Set
  \begin{gather}
    D_G:=\{ f\in \End_\QQ (\La_ \QQ) : Jf=fJ, \ \forall J \in
    \sieg^G\}.
    \label{def-DG}
  \end{gather}

\begin{lemma}
  If $J \in \sieg^G$, then $D_G \subseteq\End_\QQ (A_J)$ and the
  equality holds for $J$ in a dense subset of $\sieg^G$.
\end{lemma}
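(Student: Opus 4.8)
The inclusion $D_G \subseteq \End_\QQ(A_J)$ for $J \in \sieg^G$ is immediate from the definitions, and I would simply record it: if $f \in D_G$ then $Jf = fJ$ holds for \emph{every} $J \in \sieg^G$, in particular for the given one, whence $f \in \End_\QQ(A_J)$. The real content is the reverse inclusion on a dense subset, so the plan is to show that the locus where $\End_\QQ(A_J)$ is strictly larger than $D_G$ is negligible.

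First I would fix a $\QQ$-basis of $\La_\QQ$ and, for each $f \in \End_\QQ(\La_\QQ)$, consider $Z_f := \{ J \in \sieg^G : Jf = fJ \}$. The map $J \mapsto Jf - fJ$ is linear in $J$, hence real-analytic, so its restriction to the submanifold $\sieg^G$ is real-analytic and $Z_f$ is its zero locus. If $f \notin D_G$, then by the definition of $D_G$ there is some $J_0 \in \sieg^G$ with $J_0 f \neq f J_0$, so $Z_f \neq \sieg^G$. Since $\sieg^G$ is connected by Corollary \ref{bert} and $Z_f$ is the zero set of a real-analytic map that is not identically zero, $Z_f$ is a proper real-analytic subset, hence closed, nowhere dense and of measure zero.

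To conclude I would use that $\End_\QQ(\La_\QQ)$, being a finite-dimensional $\QQ$-vector space, is countable. Thus $\bigcup_{f \notin D_G} Z_f$ is a countable union of nowhere dense sets, so it is meager and its complement $U \subseteq \sieg^G$ is dense by the Baire category theorem (equivalently, the union has measure zero, so $U$ has full measure). For $J \in U$ every $f \notin D_G$ satisfies $Jf \neq fJ$, that is $f \notin \End_\QQ(A_J)$, so $\End_\QQ(A_J) \subseteq D_G$; together with the easy inclusion this yields $\End_\QQ(A_J) = D_G$ for all $J$ in the dense set $U$. The step most in need of care is the analyticity-plus-connectedness argument ensuring each $Z_f$ is nowhere dense: this is precisely where the connectedness of $\sieg^G$ furnished by Corollary \ref{bert} is indispensable, since on a disconnected fixed-point set some $Z_f$ could be an entire component and thus fail to be negligible.
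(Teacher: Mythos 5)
Your argument is correct, but it takes a different route from the paper. You prove the density of the equality locus "by hand": for each $f \in \End_\QQ(\La_\QQ) \setminus D_G$ the commutation locus $Z_f = \{J \in \sieg^G : Jf = fJ\}$ is the zero set of a real-analytic map that is not identically zero on the connected manifold $\sieg^G$ (connectedness coming from Corollary \ref{bert}), hence is closed and nowhere dense; since $\End_\QQ(\La_\QQ)$ is countable, Baire's theorem gives a dense set of $J$ avoiding all the $Z_f$, and there $\End_\QQ(A_J) = D_G$. The paper instead invokes the Hodge-theoretic machinery of the generic Mumford--Tate group: restricting the natural variation of Hodge structure to $\sieg^G$, there is an algebraic group $M$ with $\mt(A_J) \subseteq M$ for all $J \in \sieg^G$ and equality off a countable union of proper analytic subsets, and then the identity $\End_\QQ(A_J) = \End_\QQ(\La_\QQ)^{\mt(A_J)}$ yields both the inclusion and the generic equality, together with the extra identification $D_G = \End_\QQ(\La_\QQ)^M$. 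Your proof is more elementary and self-contained -- the countability-plus-Baire mechanism you use is essentially the same one hiding inside the generic Mumford--Tate statement -- while the paper's formulation records equation \eqref{mumford} and the group $M$, which are reused later (for instance in the proof of Corollary \ref{ACM}). The one step you rightly flag, that a non-trivial real-analytic zero locus on a \emph{connected} real-analytic manifold is nowhere dense, is standard and applies here because $\sieg^G$ is a totally geodesic (indeed complex) submanifold, so both approaches are sound.
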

\begin{proof}
  Consider the variation of Hodge structure on $\sieg$ defined in
  \ref{VHS} and restrict it to $\sieg^G$. There is an algebraic
  subgroup $M \subseteq\operatorname{CSp}(2g, \QQ)$ such that the
  Mumford-Tate group $\mt(A_J)$ is contained in $ M$ for any $J\in
  \sieg^G$ and $\mt(A_J) = M$ for $J$ in a dense subset $ \Omega
  \subseteq\sieg^G$.  The complement of $\Omega$ is a countable union
  of analytic subsets.  Recall that
  \begin{gather}
    \label{mumford}
    \End_\QQ(A_J) = \End_\QQ(\La_ \QQ) ^{\mt(A_J)}.
  \end{gather}
  So $\End_\QQ(\La_ \QQ) ^{M} \subseteq\End_\QQ(A_J) $ for any $J\in
  \sieg^G$, with equality for $J \in\Omega$.  It follows immediately
  from \eqref{endq} and \eqref{def-DG} that
  \begin{gather*}
    D_G = \bigcap_{J\in \sieg^G} \End_\QQ(A_J) = \End_\QQ(\La_ \QQ)^M
  \end{gather*}
  and that $D_G = \End_\QQ(A_J) $ for any $J\in \Omega$.
\end{proof}
\begin{prop}
  \label{prop-Bert}
  The image of $\sieg^G$ in $\ag$ coincides with the PEL subvariety
  $\zg (D_G)$.
\end{prop}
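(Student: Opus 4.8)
The goal is to identify the image of $\sieg^G$ in $\ag$ with the PEL subvariety $\zg(D_G)$. Recall from \eqref{def-DG} that $D_G$ was defined precisely as the ring of rational endomorphisms commuting with $J$ for \emph{every} $J\in\sieg^G$; by the previous Lemma we have $D_G\subseteq\End_\QQ(A_J)$ for all $J\in\sieg^G$, with equality on a dense subset. On the other hand, $\zg(D_G)$ is by definition the image of the connected component through a base point $J_0$ of the locus
\begin{gather*}
  \Sigma:=\{J\in\sieg : D_G\subseteq\End_\QQ(A_J)\}.
\end{gather*}
So the plan is to prove the equality of the two loci $\sieg^G$ and the relevant component of $\Sigma$ directly, before passing to the quotient $\ag$.

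First I would show the inclusion $\sieg^G\subseteq\Sigma$. This is immediate from the Lemma: for every $J\in\sieg^G$ we have $D_G\subseteq\End_\QQ(A_J)$, which is exactly the condition defining $\Sigma$. Since $\sieg^G$ is connected (Corollary \ref{bert}) and contains $J_0$, it lands inside the single connected component of $\Sigma$ through $J_0$, and therefore the image of $\sieg^G$ in $\ag$ is contained in $\zg(D_G)$.

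The reverse inclusion is the main point, and the natural strategy is to compare dimensions (or tangent spaces) of the two connected complex submanifolds $\sieg^G$ and the component of $\Sigma$ through $J_0$, both of which sit inside $\sieg$. The key observation is that the condition ``$D_G$ commutes with $J$'' should cut out exactly the fixed locus of $G$: indeed $G\subseteq\Sp(\La,E)$ acts on $\sieg$ by conjugation, so $J\in\sieg^G$ means $gJg^{-1}=J$, i.e. $J$ commutes with every element of the group ring $\QQ[G]$. Since the elements of $G$ lie in $D_G$ (each $g\in G$ commutes with every $J'\in\sieg^G$ by definition of $\sieg^G$, hence $g\in D_G$), requiring $D_G$ to commute with $J$ forces in particular each $g\in G$ to commute with $J$, which is the defining condition of $\sieg^G$. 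Thus $\Sigma\subseteq\{J:\ g J=J g\ \forall g\in G\}=\sieg^G$, and combined with the first inclusion we get $\Sigma=\sieg^G$ as subsets of $\sieg$, so in particular the component of $\Sigma$ through $J_0$ equals $\sieg^G$.

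The step I expect to be the most delicate is verifying that $G\subseteq D_G$ cleanly, i.e. that each group element indeed acts as a \emph{rational} endomorphism commuting with all $J\in\sieg^G$; this uses that $G$ preserves the polarization $E$ and lies in $\Sp(\La,E)\subseteq\GL(\La_\QQ)$, so its elements are rational and, being in the fixed group, commute with each $J\in\sieg^G$ by definition. Once $G\subseteq D_G$ is established the two inclusions pin down $\Sigma=\sieg^G$ exactly, and passing to the quotient by $\Sp(\La,E)$ yields that the image of $\sieg^G$ in $\ag$ is $\zg(D_G)$. I would be careful only about the component issue: since both loci are connected and share the base point $J_0$, no subtlety about choosing the right connected component arises, and the identification is complete.
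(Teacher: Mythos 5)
Your proposal is correct and follows essentially the same route as the paper's own proof: both inclusions (namely $\sieg^G\subseteq\Sigma$ via the preceding Lemma, and $\Sigma\subseteq\sieg^G$ via the observation that $G\subseteq D_G$) are exactly the paper's argument, and the connectedness of $\sieg^G$ from Corollary \ref{bert} disposes of the component issue just as you say. The only point worth making fully explicit is that the preceding Lemma also supplies a point $J_0\in\sieg^G$ with $D_G=\End_\QQ(A_{J_0})$ \emph{exactly} (not merely containment), which is what makes $\zg(D_G)$ a legitimate PEL subvariety under the paper's definition; you cite the dense-equality statement but should invoke it at this step rather than only at the outset.
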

\begin{proof}
  Set
  \begin{gather*}
    Y : = \{J \in \sieg: D_G \subseteq\End_\QQ(A_J)\} = \{J\in \sieg:
    fJ=Jf , \forall f\in D_G\}.
  \end{gather*}
  Since $G\subseteq D_G$, we get immediately that $ Y \subseteq
  \sieg^G$. Conversely, if $J\in \sieg^G$, then by definition $D_G
  \subseteq\End_\QQ(A_J)$, i.e. $J \in Y$.  Thus $Y=\sieg^G$.  By the
  previous lemma there is $J_0 \in Y$ such that $D_G
  = \End_{\QQ}(A_{J_0})$. Thus the image of $Y$ in $\ag$ is indeed the
  special subvariety $\zg(D_G)$.
\end{proof}

Recall that $N = \dim \left ( S^2H^0(C,K_C)\right )^G $ and that
$\zg(\mm, G, \theta)$ is defined in \ref{family}.
\begin{lemma}
  \label{dimensione}
  If $J \in \sieg^G $, then $\dim \sieg^G = \dim \zg(D_G) = \dim (S^2
  \La _\R)^G$ where $\La_\R$ is endowed with the complex structure
  $J$.
\end{lemma}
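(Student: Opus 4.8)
The plan is to prove the two equalities separately, the second being the substantive one. First I would show $\dim\zg(D_G)=\dim\sieg^G$. By Proposition \ref{prop-Bert}, $\zg(D_G)$ is the image of $\sieg^G$ under the projection $\pi\colon\sieg\ra\ag$. Since $\ag=\Sp(\La,E)\backslash\sieg$ with $\Sp(\La,E)$ acting properly discontinuously, $\pi$ is a local biholomorphism of complex orbifolds and hence preserves dimension, giving $\dim\zg(D_G)=\dim\sieg^G$.

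For the second equality, fix $J\in\sieg^G$ and write $W$ for $\La_\R$ endowed with the complex structure $J$, a $g$-dimensional complex vector space. Since $J\in\sieg^G$ means every element of $G$ commutes with $J$, the group $G$ acts $\C$-linearly on $W$, so $W$ is a complex $G$-representation and $(S^2\La_\R)^G$ with complex structure $J$ is by definition $(S^2 W)^G$. By Corollary \ref{bert}, $\sieg^G$ is a connected complex submanifold, and exactly as in the proof of Lemma \ref{Cartan} (where $M^G=\exp_x((T_xM)^G)$) its tangent space at $J$ is the fixed subspace of the isotropy representation, $T_J\sieg^G=(T_J\sieg)^G$. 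Thus $\dim\sieg^G=\dim_\C(T_J\sieg)^G$, and everything reduces to identifying the isotropy representation of $G$ on $T_J\sieg$.

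The heart of the argument will be this identification. Differentiating the defining equations of $\sieg$ gives $T_J\sieg=\{\dot J : J\dot J+\dot J J=0,\ E(\dot J x,Jy)+E(Jx,\dot J y)=0\}$. The first condition says that $\dot J$ is antilinear for $J$, i.e.\ an endomorphism of $\overline W$, while the polarization Hermitian form attached to $(E,J)$ turns the second (symmetry) condition into a genuine symmetry, yielding a $G$-equivariant isomorphism $T_J\sieg\cong S^2 W$ (up to replacing $W$ by its dual or conjugate). Alternatively, via $\pi$ and Kodaira--Spencer theory one has $T_{[A_J]}\ag\cong S^2 H^1(A_J,\OO)$, with the polarization supplying a $G$-equivariant isomorphism $H^1(A_J,\OO)\cong W$. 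Either way the isotropy representation is $S^2$ of the $G$-action on $W$, up to dualizing or conjugating; since these operations leave the dimension of the invariant subspace unchanged, $\dim_\C(T_J\sieg)^G=\dim(S^2 W)^G=\dim(S^2\La_\R)^G$, as desired.

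I expect the main obstacle to be pinning down this last identification with the correct conventions---linear versus conjugate-linear, $W$ versus its dual---so as to be certain that the isotropy representation on $T_J\sieg$ is genuinely the symmetric square of the representation on $W=(\La_\R,J)$ and not some twist of it. The saving observation is that the dimension of the $G$-invariants is insensitive to dualizing and to complex conjugation, so these convention choices are ultimately harmless and do not affect the final count.
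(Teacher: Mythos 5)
Your proposal is correct and follows essentially the same route as the paper: the paper's proof simply invokes Lemma \ref{Cartan} to reduce to computing $\dim (T_J\sieg)^G$ and then asserts the standard identification $T_J\sieg = S^2\La_\R$ (with the complex structure $J$), exactly the two steps you carry out. You merely supply more detail on the quotient map $\sieg\ra\ag$ and on the dual/conjugate conventions, which the paper leaves implicit.
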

\begin{proof}
  By Lemma \ref{Cartan} it is enough to compute the dimension of $(T_J
  \sieg)^G$.  But $T_J\sieg = S^2\La_\R$, where $\La_\R$ is endowed
  with the complex structure $J$.
\end{proof}

\begin{teo}
  \label{criterio}
  Fix a datum $\datum$ and assume that
  \begin{gather}
    \tag{$\ast$} N = r-3.
  \end{gather}
  Then $\zg (\mm, G, \theta)$ is a special subvariety of PEL type of
  $\ag$ that is contained in $\tor$ and such that $\zg(\mm, G,
  \theta)\cap \jac \neq \vacuo$.
\end{teo}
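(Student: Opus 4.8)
The plan is to connect the family $\zg(\mm, G, \theta)$, which lives inside the Torelli locus by construction, with the PEL subvariety $\zg(D_G)$ produced by Proposition \ref{prop-Bert}, and then use the dimension count of Lemma \ref{dimensione} together with the hypothesis $N = r-3$ to force these two to coincide. First I would fix a generic curve $C = C_t$ in the family (for $t \in Y_r$) together with a marking, giving a point $[C] \in T_g^G$ and a Jacobian $JC \in \ag$. The chosen $G$-action on $C$ induces an action of $G$ on $H^1(C, \QQ) \cong \La_\QQ$ preserving the polarization $E$, hence an embedding $G \subseteq \Sp(\La, E)$. The complex structure $J_0$ coming from the Hodge decomposition on $H^1(C, \C)$ then lies in $\sieg^G$, since $G$ acts holomorphically on $C$ and therefore commutes with $J_0$. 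This exhibits a point of $\jac$ inside the image of $\sieg^G$, which by Proposition \ref{prop-Bert} is exactly $\zg(D_G)$; so $\zg(D_G) \cap \jac \neq \vacuo$.

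Next I would identify the tangent directions. By construction, the image of $T_g^G$ under the Torelli map is (an open dense part of) $\zg(\mm, G, \theta)$, so at the point $J_0$ the tangent space to the open Torelli locus picked out by the family is the $G$-invariant part of the tangent space to $\ag$. Under the Torelli map the tangent space to $\mg$ at $[C]$ is $S^2 H^0(C, K_C)$ (via $T_{JC}\ag = S^2\La_\R$ identified with $S^2 H^0(C, K_C)$ using the complex structure $J_0$), and taking $G$-invariants gives $\left(S^2 H^0(C, K_C)\right)^G$, whose dimension is $N$ by definition. On the other hand, Lemma \ref{dimensione} computes $\dim \zg(D_G) = \dim(S^2\La_\R)^G = N$ as well. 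Since $\zg(\mm, G, \theta)$ has dimension $r-3$ by \ref{family}, the hypothesis $N = r-3$ makes all three dimensions equal.

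The heart of the argument is then a containment forcing equality. The family $\zg(\mm, G, \theta)$ is contained in $\zg(D_G)$: every Jacobian in the family carries the $G$-action, so its complex structure lies in $\sieg^G$, and the image of $\sieg^G$ is precisely $\zg(D_G)$. Both are irreducible (the former by \ref{family}, the latter being the image of the connected complex submanifold $\sieg^G$), and I have just shown they have the same dimension $r-3$ and meet. Hence $\zg(\mm, G, \theta) = \zg(D_G)$, which is special of PEL type. The statements $\zg(\mm, G, \theta) \subseteq \tor$ and $\zg(\mm, G, \theta) \cap \jac \neq \vacuo$ then follow, the first from the definition of $\zg(\mm, G, \theta)$ as a closure of Jacobians, the second from the generic point $JC$ above.

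I expect the main obstacle to be the careful identification of tangent spaces: matching $T_{JC}\ag = S^2\La_\R$ (computed in $\sieg$) with $S^2 H^0(C, K_C)$ (the deformation space of $C$ via the Torelli immersion), and checking that the $G$-invariants correspond on both sides so that the number $N$ really measures $\dim \zg(D_G)$. This requires that the period map be an immersion at $[C]$, which holds away from the hyperelliptic locus by \cite{oort-steenbrink}; one must either argue the generic member of the family is non-hyperelliptic or work directly on $\sieg^G$ where Lemma \ref{dimensione} already supplies the clean dimension formula $\dim(S^2\La_\R)^G$, sidestepping the immersion issue. Using the latter route, the dimension equality becomes automatic and the only genuine content is the set-theoretic inclusion $\zg(\mm, G, \theta) \subseteq \zg(D_G)$ together with irreducibility.
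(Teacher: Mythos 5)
Your proposal is correct and follows essentially the same route as the paper's proof: the inclusion $\zg(\mm,G,\theta)\subseteq\zg(D_{G'})$ via $J_t\in\sieg^{G'}$, irreducibility of $\zg(D_{G'})$ from Corollary \ref{bert}, and the dimension count $\dim\zg(D_{G'})=\dim(S^2\La_\R)^{G'}=N=r-3$ from Lemma \ref{dimensione}. Your closing remark is exactly the paper's choice: it works directly with $(S^2\La_\R)^{G'}$, identified with the dual of $(S^2H^0(C,K_C))^G$, so no appeal to the period map being an immersion is needed.
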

\begin{proof}
  Let $\mathcal{C} \ra T_g^G$ be the universal family as in \ref{family}.  For any
  $t\in T_g^G$, $G$ acts holomorphically on $C_t$, so it maps
  injectively into $\Sp(\La, E)$, where $\La = H_1(C_t,\Zeta)$ and $E$
  is the intersection form.  Denote by $G'$ the image of $G$ in
  $\Sp(\La, E)$. It does not depend on $t$ since it is purely
  topological.  The Siegel upper half-space $\sieg$ parametrizes
  complex structures on the real torus $\La _ \R / \La = H_1(C_t, \R)
  / H_1(C_t, \Zeta)$ compatible with the polarization $E$.  The period
  map associates to the curve $C_t$ the complex structure $J_t$ on
  $\La _ \R$ obtained from the splitting $ H^1(C_t, \C) = H^{1,0}(C_t)
  \oplus H^{0,1}(C_t) $ and the isomorphism $H_1(C_t, \R)^* _ \C =
  H^1(C_t, \C)$.  Since $G$ acts holomorphically on $C_t$, the complex
  structure $J_t$ is invariant by $G'$. This shows that $J_t \in
  \sieg^{G'}$, so the Jacobian $j(C_t) $ lies in $\zg(D_{G'})$. This
  shows that $\zg (\mm, G, \theta) \subseteq \zg(D_{G'})$. Since
  $\zg(D_{G'})$ is irreducible (e.g. by Corollary \ref{bert}), to
  conclude it is enough to check that they have the same
  dimension. The dimension of $\zg(\mm, G, \theta)$ is $r-3$, see \ref{family}.  By
  Lemma \ref{dimensione}, if $J \in \sieg^{G'}$, then $\dim
  \zg(D_{G'}) = \dim\sieg^{G'} = \dim ( S^2 \La _ \R ) ^{G'}$, where
  $\La _ \R$ is endowed with the complex structure $J$.  If $J$
  corresponds to the Jacobian of a curve $C$ in the family, then $(
  S^2 \La _ \R ) ^{G'}$ is isomorphic to the dual of $(S^2 H^0(C,
  K_C))^G$.  Thus $\dim \zg(D_{G'} )= N$ and \eqref{bona} yields the
  result.
\end{proof}

Although we are mainly interested in positive dimensional families, we
note the following corollaries, which might be of independent
interest.
\begin{cor}
  \label{ACM}
  Let $A$ be a principally polarized abelian variety. Let $G$ be a
  finite group of automorphisms of $A$ that preserve the
  polarization. If
  \begin{gather}
    \label{s2a}
    \left ( S^2 H^0(A, \Omega^1_A) \right )^G = \{0\},
  \end{gather}
  then $A$ has complex multiplication.
\end{cor}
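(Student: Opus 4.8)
The plan is to realize $A$ as a $G$-fixed point of $\sieg$ and then apply Proposition \ref{prop-Bert} and Lemma \ref{dimensione} to show that the associated PEL subvariety collapses to a single point. First I would set $\La := H_1(A, \Zeta)$ and let $E$ be the alternating form on $\La$ coming from the principal polarization. The action of $G$ on $A$ induces an action on $\La$ preserving $E$, hence a finite subgroup $G' \subseteq \Sp(\La, E)$. Since every element of $G$ acts holomorphically, the complex structure $J_0$ on $\La_\R$ determined by $A$ commutes with $G'$, so that $J_0 \in \sieg^{G'}$ and $A = A_{J_0}$.

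Next I would compute the dimension of $\sieg^{G'}$. By Lemma \ref{dimensione} we have $\dim \sieg^{G'} = \dim (S^2 \La_\R)^{G'}$, where $\La_\R$ carries the complex structure $J_0$. Exactly as in the proof of Theorem \ref{criterio}, the complex vector space $(\La_\R, J_0)$ is identified with the tangent space $T_0A$, so its dual is $H^0(A, \Omega^1_A)$; consequently $(S^2 \La_\R)^{G'}$ is isomorphic to the dual of $\left( S^2 H^0(A, \Omega^1_A) \right)^G$. The hypothesis \eqref{s2a} then forces $\dim \sieg^{G'} = 0$. By Corollary \ref{bert} the locus $\sieg^{G'}$ is a connected complex submanifold of $\sieg$, and being zero-dimensional it reduces to the single point $\{J_0\}$.

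Finally I would transfer the conclusion to $\ag$. By Proposition \ref{prop-Bert} the image of $\sieg^{G'}$ in $\ag$ is the PEL subvariety $\zg(D_{G'})$, which is therefore zero-dimensional, i.e. the single point $[A]$. Since every special subvariety contains a dense set of CM points, a zero-dimensional special subvariety is itself a CM point. Hence $A$ has complex multiplication.

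The only genuinely non-formal step is the identification of $(S^2 \La_\R)^{G'}$ with the dual of $\left( S^2 H^0(A, \Omega^1_A) \right)^G$, which rests on matching the Hodge-theoretic $G$-action on $H^0(A, \Omega^1_A)$ with the topological $G'$-action on $\La$. This is precisely the compatibility already used for Jacobians in the proof of Theorem \ref{criterio}, and it carries over verbatim to an arbitrary principally polarized abelian variety; everything else is a direct assembly of Corollary \ref{bert}, Lemma \ref{dimensione} and Proposition \ref{prop-Bert}.
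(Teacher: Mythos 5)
Your proposal is correct and follows essentially the same route as the paper's own first argument: identify $J_0\in\sieg^{G'}$, use Lemma \ref{dimensione} to see that \eqref{s2a} forces $\sieg^{G'}=\{J_0\}$, and conclude via Proposition \ref{prop-Bert} together with the density of CM points in special subvarieties. The paper additionally sketches a second, more self-contained argument via Mumford--Tate groups, but that is offered only as an alternative to the proof you gave.
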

\begin{proof}
  Set $\La := H_1(A, \Zeta)$. Assume that $A$ equals $\La_\R / \La$
  provided with some $J_0\in \sieg$ and that $G \subset\Sp(\La,
  E)$. As in the previous proof, \eqref{s2a} implies that
  $\sieg^G=\{J_0\}$.  The result follows immediately from Proposition
  \ref{prop-Bert}, since special varieties contain CM points.
  Nevertheless a more direct argument can be given as follows.  The
  elements of $\sieg$ correspond to morphisms $h : \mathbb{S} \ra
  \operatorname{CSp}(\La, E)$. If $J$ corresponds to $h$, the
  Mumford-Tate group of $A_J$ is the smallest algebraic subgroup of
  $\GL(\La_\C )$ that is defined over $\QQ$ and contains
  $h(\mathbb{S})$.  Denote by $h_0$ the morphism corresponding to
  $J_0$ and by $M_0$ the Mumford-Tate group of $A_{J_0}$.  We claim
  that the set of morphisms $h $ with $h(\mathbb{S}) \subseteq M_0$
  reduces to $h_0$.  Indeed if $h$ corresponds to $J$ and
  $h(\mathbb{S}) \subseteq M_0$, then $\mt(A_{J}) \subseteq M_0$.  So
  using \eqref{mumford}
  \begin{gather*}
    \End_\QQ (A_{J_0}) = \End_\QQ (\La_\QQ) ^{M_0} \subseteq
    \End_\QQ (\La_\QQ) ^{\mt(A_J)} =
    \End_\QQ (A_{J}).
  \end{gather*}
  Thus $G \subseteq \End_\QQ (A_{J})$, so $J \in \sieg^G$, $J=J_0$ and
  $h=h_0$ as claimed.  If $g\in M_0(\R)$, the morphism $gh_0 g\meno$
  clearly maps $\mathbb{S} $ to $M_0$. By the above $gh_0g\meno =
  h_0$. So $h_0(\mathbb{S}) $ is contained in the center of
  $M_0(\R)$. Since $M_0(\QQ)$ is dense in $M_0(\R)$ \cite [Cor. 18.3
  p. 220]{borel-linear-algebraic}, this center 
  coincides with the set of real points of the algebraic group
  $Z(M_0)$, which is defined over $\QQ$. Thus $M_0=Z(M_0)$, $M_0$ is a
  torus and $A$ is CM.
\end{proof}

\begin{cor}\label{JCM}
  Let $C$ be a curve and $G $ a subgroup of $\Aut(C)$. If $(S^2 H^0(C,
  $ $K_C))^G$ $ = \{0\}$, then $J(C)$ is an abelian variety of CM
  type.
\end{cor}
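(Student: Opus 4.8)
The plan is to deduce this statement from Corollary \ref{ACM} applied to the principally polarized abelian variety $A := J(C)$. First I would record the standard structure. Since the statement is trivial unless $g \geq 2$ (in which case $\Aut(C)$ is finite), I may assume $G$ is finite. The Jacobian $J(C)$ carries a principal polarization $E$, namely the one induced by the intersection form on $\La := H_1(C, \Zeta)$, and $H^0(J(C), \Omega^1_{J(C)})$ is canonically the cotangent space of $J(C)$ at the origin. An element $\sigma \in G \subseteq \Aut(C)$ is an orientation-preserving diffeomorphism, so $\sigma_*$ preserves the intersection form on $\La$; functoriality of the Jacobian then produces an automorphism of $J(C)$ preserving $E$. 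This yields a homomorphism $G \ra \Sp(\La, E)$, exactly as in the proof of Theorem \ref{criterio}, whose image $G'$ is a finite group of polarization-preserving automorphisms of $J(C)$.

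Next I would transfer the hypothesis to the Jacobian. Pullback along the Abel--Jacobi embedding $C \ra J(C)$ induces a canonical isomorphism $H^0(J(C), \Omega^1_{J(C)}) \cong H^0(C, K_C)$, and this isomorphism is $G$-equivariant: translations act trivially on global $1$-forms, so only the linear part of each automorphism is relevant, and on the cotangent space at the origin it is identified with the $G$-action on $H^0(C, K_C)$. Taking symmetric squares and $G$-invariants, I obtain a natural isomorphism $(S^2 H^0(J(C), \Omega^1_{J(C)}))^{G'} \cong (S^2 H^0(C, K_C))^G$. By assumption the right-hand side vanishes, hence so does the left-hand side; this is precisely condition \eqref{s2a} for the pair $(J(C), G')$.

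Finally, Corollary \ref{ACM} applied to $A = J(C)$ with the group $G'$ gives that $J(C)$ has complex multiplication, that is, it is an abelian variety of CM type, as desired.

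The only point requiring care — and the step I would single out as the crux — is the $G$-equivariance of the identification $H^0(J(C), \Omega^1_{J(C)}) \cong H^0(C, K_C)$, together with the fact that the induced automorphisms preserve the principal polarization. Both are standard consequences of the functoriality of the Jacobian and of the compatibility of the Abel--Jacobi map with $\Aut(C)$; indeed, the analogous identification is already used in the proof of Theorem \ref{criterio}. Once these are in place, the corollary is an immediate specialization of Corollary \ref{ACM}.
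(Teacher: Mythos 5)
Your proposal is correct and follows exactly the route the paper intends: the paper states Corollary \ref{JCM} with no separate proof, treating it as an immediate specialization of Corollary \ref{ACM} via the canonical polarization-preserving action of $G$ on $J(C)$ and the $G$-equivariant identification $H^0(J(C),\Omega^1_{J(C)})\cong H^0(C,K_C)$. Your care about the translation ambiguity in the Abel--Jacobi map (irrelevant for global $1$-forms) is the right point to flag, and nothing further is needed.
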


\begin{say}
  Using this criterion and the \verb|MAGMA| script, we found some
  examples of CM Jacobians: 10 for $g=2$, 19 for $g=3$, 18 for $g=4$,
  17 for $g=5$, 17 for $g=6$, 23 for $g=7$.  If $C$ and $G$ satisfy
  the hypothesis of Corollary \ref{JCM}, then clearly the
  corresponding family is a point, so $C$ is a curve \emph{with many
    automorphisms}, using the terminology of \cite [Def. 5.17]
  {oort-moduli}. As remarked there it is expected that not every curve
  with many automorphisms be CM. The above criterion identifies a
  subclass of curves with many automorphisms where this is true. It
  would be interesting to check if the 0--dimensional examples with
  $N>0$ (they do exist) are CM or not.
\end{say}

\section{New examples}
\label{examples-section}

\begin{say}
  \label{list}
  In this section we give the list of all new families of Galois
  covers and we explain some of them with more details.  As explained
  in the Appendix the \verb|MAGMA| script computes the list of
    all Hurwitz equivalence classes of data.  Next it decomposes the
    representation on $H^0(C,K_C)$ using the Chevalley-Weil formula
    \eqref {eq_ChevWeilFormula} and computes the number $N$.  The
  complete list of all the families corresponding to the data $(\mm,
  G, \theta)$ with genus $g\leq \numero$ such that $N=r-3>0$ is given
  in Table 2 in the Appendix.
  
  The new examples are the ones in Table 2 with numbers (28)--(40).
  For them we now give a presentation of the Galois group and an
  explicit description of a representative of an epimorphism $\theta$
  (we use the same notation as in \S
\ref {theo: Riemann} and \S \ref {say-ssg}).

  \bigskip
  \begin{center}
    Genus $2$
  \end{center}
  \smallskip

  \begin{itemize}[labelindent=\parindent,leftmargin=*]
  \item [(28)] $S_3 = \sx x, y : y^2=x^3=1, y\meno x y =
    x^2\xs$.   \\
    $x_1= y, x_2= y, x_3 = x$, $x_4 = x^2$.
  \item [(29)] $D_4 = \sx x, y : y^2=x^4=1, y\meno x y =
    x^3\xs$.   \\
    $x_1= x^3y, x_2= x^2, x_3 = y$, $x_4 = x^3$.
  \item [(30)] $D_6 = \sx x, y : y^2=x^6=1, y\meno x y =
    x^5\xs$.   \\
    $x_1= x^3y, x_2= x^4y, x_3 = x^3$, $x_4 = x^4$.
  \end{itemize}

  \smallskip
  \begin{center} {Genus $3$}
  \end{center}
 
  \smallskip

  \begin{itemize}[labelindent=\parindent,leftmargin=*]
  \item [(31)] $S_3 = \sx x, y : y^2=x^3=1, y\meno x y =
    x^2\xs$.   \\
    $x_1= xy, x_2= x^2y, x_3 = y$, $x_4 = xy$, $x_5 = x^2$.
  \item [(32)] $D_4 = \sx x, y : y^2=x^4=1, y\meno x y =
    x^3\xs$.   \\
    $x_1= xy, x_2= x^2y, x_3 = x^2$, $x_4 = x^2y$, $x_5 = x^3y$.
  \end{itemize}

  \begin{itemize}[labelindent=\parindent,leftmargin=*]
  \item [(33)] $G=A_4$.
    Set $y_1:=(123)$, $y_2:=(12)(34)$, $y_3:=(13)(24)$.\\
    $x_1= y_3=(13)(24), x_2= y_2=(12)(34), x_3 = y_1y_3=(243)$, $x_4 =
    y_1^2y_3=(124)$.
  \item  [(34)] $((\Zeta/4) \times (\Zeta/2)) \rtimes \Zeta/2 = \sx y_1,y_2,y_3 : y_1^2 = y_2^2 = y_3^4 =1, y_2y_3 = y_3 y_2, y_1^{-1} y_2 y_1 = y_2 y_3^2, y_1^{-1} y_3 y_1 = y_3\xs $.   \\
    $x_1= y_1, x_2= y_1y_2y_3^3, x_3 = y_2y_3^2$, $x_4 =y_3^3$.
  \item [(35)] $G=S_4$.
    Set $y_1:=(12)$, $y_2:=(123)$, $y_3:=(13)(24)$, $y_4:=(14)(23)$.\\
    $x_1= y_1y_2^2=(13), x_2= y_3y_4=(12)(34), x_3 = y_1=(12)$, $x_4 =
    y_2^2y_4=(143)$.
  \end{itemize}

  \smallskip
  \begin{center} {Genus $4$}
  \end{center}
  \smallskip

  \begin{itemize}[labelindent=\parindent,leftmargin=*]
  \item [(36)] $Q_8= \sx y_1, y_2, y_3 \ | \ y_1^2 =y_2^2 =y_3 , y_3^2
    = 1,
    y_1 \meno y_2 y_1 = y_2 y_3\xs$.  \\
    $x_1= y_3, x_2= y_2y_3, x_3 = y_1y_2$, $x_4 = y_1y_3$.
  \item [(37)] $G=A_4$.
    Set $y_1:=(123)$, $y_2:=(12)(34)$, $y_3:=(13)(24)$.\\
    $x_1= y_3=(13)(24), x_2= y_1=(123), x_3 = y_1=(123)$, $x_4 =
    y_1y_3=(243)$.
  \item [(38)] $(\Zeta/3) \times S_3 = \langle y_1,y_2, y_3 \ | \
    y_1^2 = y_2^3 = y_3^3 = 1, \ y_1 y_2 y_1^{-1} = y_2, y_2 y_3
    y_2^{-1} = y_3, \ y_1 y_3 y_1^{-1} = y_3^2
    \rangle$.   \\
    $x_1= y_1y_3^2, x_2= y_1y_3, x_3 = y_2y_3$, $x_4 =y_2^2$.
  \end{itemize}

  \smallskip
  \begin{center} {Genus $5$}
  \end{center}
  \smallskip

  \begin{itemize}[labelindent=\parindent,leftmargin=*]
  \item [(39)] $(\Zeta/3) \rtimes \Zeta/4 = \sx y_1, y_3 \ | \
    y_1^4=y_3^3 =1 ,
    y_1 \meno y_3 y_1 = y_3^2\xs$.   \\
    $x_1= y_1^2, x_2= y_3, x_3 = y_1^3y_3^2$, $x_4 =y_1^3y_3$.
  \end{itemize}

  \smallskip
  \begin{center} {Genus $7$}
  \end{center}
  \smallskip

  \begin{itemize}[labelindent=\parindent,leftmargin=*]
  \item [(40)] $ y_1 =
    \begin{pmatrix}
      2 & 1 \\ 2 & 0
    \end{pmatrix}
    \ y_2 =
    \begin{pmatrix}
      0&2 \\ 1& 0
    \end{pmatrix}
    \ y_3 =
    \begin{pmatrix}
      1& 2 \\ 2& 2
    \end{pmatrix}
    \ y_4 =
    \begin{pmatrix}
      2 & 0 \\ 0 & 2
    \end{pmatrix}.  $
    \vspace{.2EM}\\
    $\Sl(2,\mathbb{F}_3)= \sx y_1, y_2, y_3, y_4 | y_1^3 =y_4^2 =1 ,
    y_2^2 = y_3^2 = y_4 , y_1 \meno y_2 y_1 = y_3 $, $ y_1\meno
    y_3 y_1 = y_2 y_3 , y_2 \meno y_3 y_2 = y_3 y_ 4\xs$.\\
    $
    x_1= y_4, \ x_2= y_1^2 y_2 y_3 y_4 =
    \begin{pmatrix}
      1 &2 \\ 0 & 1
    \end{pmatrix}, \ x_3 = y_1^2 y_2 y_4 =
    \begin{pmatrix}
      1 & 0 \\ 1 & 1
    \end{pmatrix}$,
    \vspace{.2EM}\\
    $ x_4 =y_1^2y_3y_4 =
    \begin{pmatrix}
      2 & 2 \\ 1 & 0
    \end{pmatrix}.  $
  \end{itemize}


  Now we wish to make some remarks on the geometry of the various
  examples. First of all, we show how to check by hand that the
  examples give indeed special varieties. We do this by explaining in
  detail the computation in two sample examples, namely families (37)
  and (40), see \ref{sample-1} and \ref{sample-2}.  We also show that
  the families (37), (40) and (25) are not contained in the
  hyperelliptic locus (see \ref{HE-1}, \ref{HE-2}, \ref{HE-3}), while
  (8), (22), (36) and (39) are hyperelliptic (see \ref{HE-4}).
  We show that (25) and (38) have the same image in $\mathsf{M_4}$ and
  in $\mathsf{A}_4$, see \ref{25=38}.
  We also note that in the case of family (25) every Jacobian is
  reducible and it is possible to identify explicitely a CM point, see
  \ref{CM-say}.
\end{say}

  The following observation simplifies the computation of $N$.  Denote
  by $G_0$ the set of elements of order 2 in $G$. The set of elements
  of order greater than 2 can be written as $G_1 \sqcup G_1\meno$ for
  some choice of $G_1 \subseteq G$.  Then \eqref{N} becomes
  \begin{multline}
    \label{N2}
    N= \frac {\chi_\rho(1) + \chi_\rho(1)^2} {2|G|}+ \frac{1}{2|G|}
    \sum_{x \in G_0 } ( \chi_\rho(x^2) + \chi_\rho(x)^2) + \\+
    \frac{1}{2|G|} \sum_{x \in G_1 } ( \chi_\rho(x^2) +
    \chi_\rho(x)^2) + \frac{1}{2|G|} \sum_{x \in G_1 } (
    \chi_\rho(x^{-2}) + \chi_\rho(x\meno)^2)
    =\\
    = \frac{ g + g^2 + |G_0| g }{2|G|} + \frac{1}{2|G|} \sum_{x \in
      G_0 } \chi_\rho(x)^2 + \frac{1}{|G|} \sum_{x \in G_1 } \Re \bigl
    ( \chi_\rho(x^2) + \chi_\rho(x)^2 \bigr)
  \end{multline}

\begin{say}
  \label{sample-1}
  Example (37).  One easily checks that $\theta$ is an epimorphism.
  The conjugacy classes of $A_4$ are $\{1\}$, $A:=\{y_1=(123), (134) ,
  (142) , (243)\}$ $B:=\{(132) , (143), (124) , (234)\}$,
  $C=\{y_2=(12)(34), (13)(24), (14)(23)\}$.  We have $G_0= C$ and we
  can set $G_1:=A$ so that $G_1\meno = B$.  It suffices to compute
  $\chi_\rho (y_1) $ and $\chi_\rho(y_2)$. We have $ |C_G(y_1)| =3 $,
  $ |C_G(y_2)| =4$.  Moreover $y_1 \sim_G x_j^{ m_j \nu /3}$ iff
  $\nu=1$ and $j\in \{2,3,4\}$ and $y_2 \sim_G x_j^{ m_j \nu /2}$ iff
  $\nu=1$ and $j=1$.  Using \eqref{carg} one gets $ \chi_\rho (y_1) =
  \zeta_3$, $ \chi_\rho (y_2) = 0$. Hence by \eqref{N2}
  \begin{gather*}
    24 \, N = 4 + 16 + 3 \bigl ( \chi_\rho(y_2)^2 + \chi_\rho(y_2^2) )
    + 8 \Re (\chi_\rho(y_1^2)  + \chi_\rho(y_1)^2 \bigr) =\\
    =32 + 8 \Re (\bar{\zeta_3} + \zeta_3^2) = 24.
  \end{gather*}
  So $N =1$ and by Theorem \ref{criterio} we get a special curve in
  $\mathsf{T}_4$.
\end{say}
\begin{say}
  \label{HE-1}
  We claim that the above family (37) does not contain any
  hyperelliptic curve.  In fact the hyperelliptic involution is
  central in $\Aut(C)$. Hence there is no hyperelliptic involution
  contained in $G$ since its center is trivial.  If there is a
  hyperelliptic involution $\tau$ outside $G$, then $\Fix(\tau)$
  consists of 10 points and it is $G$-invariant, so it is a union of
  $G$-orbits. The only possibility is that it consists of 2 orbits,
  the one over $t_1$, of cardinality 6, and another one of cardinality
  4 over one of the critical values $t_2, t_3, t_4$.  If $p\in
  \pi\meno (t_1)$, then the stabilizer $\Aut(C)_p$ contains $
  G_p\times \sx \tau \xs \cong \Zeta/2 \times \Zeta /2$. This is
  impossible since $\Aut(C)_p$ is cyclic.
\end{say}

\begin{say}
  \label{sample-2}
  Example (40).  One easily checks that $\theta$ is an epimorphism.
  In $\Sl(2, \mathbb{F}_3)$ there are 7 conjugacy classes: $\{1\}$,
  $G_0=\{y_4\} $,
  \begin{align*}
    A &= \{ y_1=
    \begin{pmatrix}
      2 & 1 \\ 2 & 0
    \end{pmatrix}
    , \alpha_1:=\begin{pmatrix} 0 & 1 \\ 2 & 2
    \end{pmatrix}
    , \alpha_2:=\begin{pmatrix} 1 & 1 \\ 0 & 1
    \end{pmatrix}
    , \alpha_3:=
    \begin{pmatrix}
      1 & 0 \\ 2 & 1
    \end{pmatrix}
    \}
    \\
    B &=\{y_1^{-1}, \alfa_1^{-1}, \alpha_2^{-1}, \alpha_3^{-1}\}\\
    C &= \{ y_2, y_3, y_2y_3, y_2\meno, y_3\meno, (y_2 y_3)\meno\}\\
    D &=\{a_1:=
    \begin{pmatrix}
      2 & 1 \\ 0 & 2
    \end{pmatrix}
    , a_2:=
    \begin{pmatrix}
      0 & 1 \\ 2 & 1
    \end{pmatrix}
    , a_3:=
    \begin{pmatrix}
      1 & 1 \\ 2 & 0
    \end{pmatrix}
    , a_4:=
    \begin{pmatrix}
      2 & 0 \\ 2 & 2
    \end{pmatrix}
    \} \\
    F &= \{a_1\meno, a_2\meno, a_3\meno, a_4\meno\}
  \end{align*}
  The elements of $A$ and $B$ have order 3. The elements in $C$ have
  order 4. The elements in $D$ and $F$ have order 6.  Using
  \eqref{carg} one computes
  \begin{gather*}
    \chi_\rho(y_4) = -5 \qquad  \chi_\rho(y_1) = 2 \zeta_3^ 2 -1\\
    \chi_\rho(y_2) =1 \qquad \chi_\rho(a_1) =1.
  \end{gather*}
  Using \eqref{N2} one gets $N=1$, hence this family yields a special
  curve in $\mathsf{T}_7$.
\end{say}
\begin{say}
  \label{HE-2}
  We claim that the above family (40) is not contained in the
  hyperelliptic locus. In fact the center of $G$ is generated by
  $y_4$, which has order 2, but its trace is $-5$. So it does not act
  as $-1$. Assume that $\tau$ is a hyperelliptic involution not
  contained in $G$.  The set of fixed points of $\tau $ has order 16
  and it is $G$-invariant. The orbits of $G$ have cardinality 24, 12
  (only one orbit) or 8 (three orbits). Thus the only possibility is
  that $\tau$ fixes pointwise the fibres over say $t_2$ and $t_3$.  We
  can assume that $t_1 = 1$, $t_2 = 0$ and $t_3 = \infty$. Then $\tau$
  descends to an involution $\hat{\tau}$ of $\PP^1$ fixing both $0$
  and $\infty$ and interchanging $t_1$ and $t_4$. But then necessarily
  $\hat{\tau}(z) = -z$ and $q_4 = -1$. Therefore there is at most one
  hyperelliptic curve in this family.
\end{say}

\begin{say}
  \label{HE-4}
  We now observe that some of the families in Table \ref{data}
  are contained in the hyperelliptic locus. In fact, in example (36) \
  one can check that $\chi_\rho(y_3 ) = -4$. This is enough to
  conclude that $y_3$ is the hyperelliptic involution. Indeed, fix on
  $H^0(C,K_C)$ a $G$-invariant Hermitian product and consider the
  Hermitian product $(A,B) = \spur AB^*$ on
  $\operatorname{End}H^0(C,K_C)$.  Since $\spur \rho(y_3) =
  (\rho(y_3), \id)= -4$, and $\rho(y_3) $ is unitary, the
  Cauchy-Schwarz inequality yields $\rho(y_3) = - \id$. This shows
  that $y_3$ is the hyperelliptic involution and all the family is
  contained in the hyperelliptic locus.

  The same applies to example (39) \ since $\chi_\rho(y_2) = - 5$.  In
  the same way one can check that families (8) and (22) of Table 1 and
  Table 2 in \cite{moonen-oort} are contained in the hyperelliptic
  locus.

\end{say}

\begin{say}
  \label{25=38}
  We claim that families (38) and (25) have the same image in $
  \mathsf{M}_4$ and in $\mathsf{A}_4$. In other words we claim that
  each curve in (25)
admits another
  $\Zeta/2$--action. To do this we first show that (25) is contained
  in family (10) of Table 1 in \cite{moonen-oort}.  This is the family
  of cyclic covers of $\PP^1$ with group $G = \Zeta/3$ and
  ramification data $(3,3,3,3,3,3)$.  An affine equation for this
  cyclic family is the following:
    \begin{equation}
      \label{BoonenZ3}
      y^3 = \prod_{i=1}^6(x-t_i)
    \end{equation}
    where the group action is $(x,y) \mapsto (x, \zeta_3y)$.  Choose
    the critical values as follows: $t_1 = 1, \ t_2 = \zeta_3, \ t_3 =
    \zeta_3^2, \ t_4 = t, \ t_5 = \zeta_3 t, \ t_6 = \zeta_3^2 t$. We
    obtain the one dimensional family
    \begin{equation}
      \label{NostraZ3Z3}
      y^3 = (x^3 -1)(x^3 - t^3)
    \end{equation}
    as $t$ varies in $\C\setminus \{0,1\}$.  This is in fact family
    (25) with group $\Zeta/3 \times \Zeta/3$ and ramification data
    $(3,3,3,3)$, where the action of the second generator is $(x,y)
    \mapsto (\zeta_3 x, \zeta_3y)$.  Now consider the maps $h_i : C_t
    \ra C_t$
    \begin{gather*}
      h_1(x,y):= ({t}/{x}, {ty}/{x^2})\qquad h_2(x,y) = (x,
      \zeta_3y)\qquad h_3(x,y) = (\zeta_3x, \zeta_3y).
    \end{gather*}
    $h_2$ and $h_3$ give the above action of $\Zeta /3\times
    \Zeta/3$. Together with $h_1$ they yield an action of the group $
    \Zeta/3 \times S_3 = \langle h_1,h_2, h_3 \ | \ h_1^2 = h_2^3 =
    h_3^3 = 1, \ h_1 h_2 h_1^{-1} = h_2, h_2h_3h_2\meno = h_3, \ h_1
    h_3 h_1^{-1} = h_3^2 \rangle$. This action has ramification data
    $(2,2,3,3)$. This proves that (25)=(38).
\end{say}

\begin{say}
  \label{HE-3}
  We note in passing that family (25) does not intersect the
  hyperelliptic locus.  Assume that an element of the family admits a
  hyperelliptic involution $\tau$.  Since $G=\Zeta/3 \times
    \Zeta/3$ does not contain elements of order 2, $\tau \not \in G$.
  Since $\tau$ commutes with $G$, the fixed points of $\tau$ form a
  $G$-invariant set of cardinality $10$.  Since all the orbits of $G$
  have cardinality either 9 or 3, this is impossible.
\end{say}

\begin{say}
  \label{CM-say}
  Note that using the method of \cite[pp. 68-69]{rohde} it is easy to
  identify a CM point in family (25).  Setting $t=-1$ in
  \eqref{NostraZ3Z3}, we get $C_{-1} : = \{y^3=(x^6 -1) =
  \prod_{i=0}^5 (x-\xi_6^i)\}$. Let $V $ be the Fermat curve with
  affine equation $x^{18} - y^{18} -1 = 0$. Then $f(x,y) = (x^3, y^6)$
  is a well-defined non-constant map $f : V \ra C_{-1}$.  By Lemma
  2.4.3 in \cite{rohde} $C_{-1}$ has complex multiplication.
One can also check that the subgroup of order 3 generated by
    $h_3$ 
    acts freely on $C_t$, with quotient a curve of genus 2. Hence the
    Jacobian of $C_t$ is isogenous to a product for every $t$.
\end{say}

\section{Proof of Theorem \ref{mainC}} \label{id-section}

We already saw in \ref{25=38} that distinct data can give rise to the
same locus in $\mg$ and  in $\ag$.  In this
section we explore this fact systematically and we prove Theorem
\ref{mainC}.

\begin{lemma}
  \label{trick} Assume that two data $(\mm, G, \theta ) $ and $ (\mm',
  $ $ G',$ $ \theta')$ satisfying condition \eqref{bona} give rise to
  the same Shimura variety $\zg(\mm, G, \theta ) = \zg (\mm', G',
  \theta')$.  Then there is a third datum $(\mm'', G'', \theta'')$,
  also satifying \eqref{bona}, such that $\zg(\mm, G,$ $ \theta ) =
  \zg (\mm', G', \theta') =\zg (\mm'', G'', \theta'')$ and such that
  there are monomorphisms $f :G \ra G''$ and $f': G' \ra
  G''$. Moreover if $|f(G) \cap f'(G') | \leq k$, then
  \begin{gather*}
    |G''| \geq \frac{ |G| \cd |G'|}{k}.
  \end{gather*}
\end{lemma}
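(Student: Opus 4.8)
The plan is to prove this lemma by realizing both data as arising from a single curve equipped with a larger automorphism group. The key observation is that the hypothesis $\zg(\mm, G, \theta) = \zg(\mm', G', \theta')$ means the two families of Galois covers sweep out the same locus in $\mg$ (and in $\ag$). Since both families have dimension $r-3 = r'-3$, they must have the same dimension, hence $r = r'$. The strategy is to pick a generic curve $C$ in the common locus $\mathsf{M}(\mm, G, \theta) = \mathsf{M}(\mm', G', \theta')$ and exploit the fact that $C$ carries both $G$-actions and $G'$-actions inside $\Aut(C)$.

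First I would fix a very general point $[C]$ in the common subvariety of $\mg$. By the description in \ref{family}, being in $\mathsf{M}(\mm, G, \theta)$ means $C$ admits a $G$-action with datum $(\mm, G, \theta)$ whose quotient is $\PP^1$, and likewise $C$ admits a $G'$-action with datum $(\mm', G', \theta')$. Thus we have two subgroups of $\Aut(C)$, giving monomorphisms $G \hookrightarrow \Aut(C)$ and $G' \hookrightarrow \Aut(C)$. I would then let $G''$ be the subgroup of $\Aut(C)$ generated by the images of $G$ and $G'$. This immediately produces the monomorphisms $f: G \ra G''$ and $f': G' \ra G''$ that the statement requires, with $f(G)$ and $f'(G')$ the two given subgroups.

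The main work is to verify that $G''$ itself arises from a \emph{datum} $(\mm'', G'', \theta'')$ whose associated subvariety $\zg(\mm'', G'', \theta'')$ coincides with the common one, and that this datum satisfies condition \eqref{bona}. Here I would use that the quotient $C/G''$ is again $\PP^1$: since $C/G$ is already $\PP^1$ and $G''$ contains $G$, the further quotient $C/G'' = (C/G)/(G''/\ldots)$ is a quotient of $\PP^1$, hence $\PP^1$. The ramification data $\mm''$ and the epimorphism $\theta''$ are read off from the $G''$-action on $C$, using the machinery of \ref{say-ssg} and \ref{theo: Riemann}. The dimension count in Lemma \ref{dimensione} and Theorem \ref{criterio} then controls $N''$: since $\zg(\mm'', G'', \theta'')$ contains (indeed equals) a special subvariety of the expected dimension, and since the sign conventions force $\dim \zg(\mm'',G'',\theta'') = r''-3 = N''$ by the same computation $N'' = \dim(S^2\La_\R)^{G''}$ as in the proof of Theorem \ref{criterio}, condition \eqref{bona} holds for the new datum. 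The genericity of $C$ ensures $\Aut(C)$ is no larger than forced by the family, so that $G''$ is exactly the generic automorphism group and $\mathsf{M}(\mm'', G'', \theta'')$ is the full common locus rather than a proper subvariety.

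Finally, the order estimate $|G''| \geq |G|\cd|G'|/k$ is the easy part: it is the standard product formula $|f(G)\cd f'(G')| = |f(G)|\cd|f'(G')| / |f(G)\cap f'(G')|$ combined with the fact that $G''$ contains the product set $f(G)\cd f'(G')$, so $|G''| \geq |f(G)|\cd|f'(G')|/|f(G)\cap f'(G')| = |G|\cd|G'|/|f(G)\cap f'(G')| \geq |G|\cd|G'|/k$. The hard part will be the identification that $G''$ (generated inside $\Aut(C)$ for a \emph{generic} member) gives a datum with the same subvariety and still satisfies \eqref{bona}; this requires a careful argument that passing to the generic curve does not lose dimension and that $C/G''$ remains rational, which is where I would spend most of the effort and where a genericity hypothesis on $[C]$ is essential.
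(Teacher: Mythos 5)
Your overall strategy is the same as the paper's: take a generic curve $C$ in the common locus, enlarge the group inside $\Aut(C)$ (the paper takes $G''=\Aut(C)$ itself, you take the subgroup generated by the two images --- this difference is immaterial), observe that $C/G''$ is a quotient of $\PP^1=C/G$ and hence rational so that the $G''$-action defines a datum, and close with the product-set estimate $|G''|\geq |f(G)f'(G')| = |G|\cdot |G'|/|f(G)\cap f'(G')|$, which is exactly the paper's coset-counting argument via $G/(G\cap G')\hookrightarrow G''/G'$. The first and last steps of your proposal are fine.

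The gap is in the middle step, where you assert that ``the sign conventions force $\dim\zg(\mm'',G'',\theta'')=r''-3=N''$ by the same computation as in the proof of Theorem \ref{criterio}.'' That computation only gives $\dim\zg(D_{G''})=N''$, and since the $(r''-3)$-dimensional family sits inside $\zg(D_{G''})$ it yields the one-sided inequality $r''-3\leq N''$. The equality $N''=r''-3$ is precisely condition \eqref{bona} for the new datum --- it is the thing to be proved, not a formal consequence. The missing observation is the monotonicity of $N$ under enlargement of the group: since $f(G)\subseteq G''$ one has $(S^2H^0(C,K_C))^{G''}\subseteq (S^2H^0(C,K_C))^{G}$, hence $N''\leq N$. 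Combining this with $r-3\leq r''-3$ (the family of the new datum contains the old one) and with $r''-3\leq N''$ gives the chain $r''-3\leq N''\leq N=r-3\leq r''-3$, which forces all quantities to be equal; this simultaneously establishes \eqref{bona} for $(\mm'',G'',\theta'')$ and the equality of the three subvarieties (irreducible varieties of the same dimension, one containing the other). In particular the genericity worries you defer to the end --- that ``passing to the generic curve does not lose dimension'' and that $\mathsf{M}(\mm'',G'',\theta'')$ is not strictly larger than the common locus --- are resolved by this same sandwich and do not require a separate argument; without the inequality $N''\leq N$ I do not see how your appeal to genericity alone would rule out $r''>r$.
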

\begin{proof}
  Let $C$ be a generic curve in the family defined by $\datum$ or
  $(\mm', $ $G',$ $ \theta')$.  Let $G''$ denote the automorphism
  group of $C$.  The quotient $C /G''$ has genus zero and the action
  of $G''$ on $C$ defines a datum $ (\mm'', G'', \theta'')$.  By
  construction there are monomorphisms $f$ and $f'$ as required
  corresponding to the actions of $G$ and $G'$ on $C$. So we can
  consider $G$ and $G'$ as subgroups of $G''$.  The family defined by
  $ (\mm'', G'', \theta'')$ contains the one defined by $\datum$,
  which coincides with the one defined by $(\mm', G',
  \theta')$. Therefore $ r-3 = r'-3 \leq r'' -3$.  Since $N'' : = \dim
  (S^2H^0(C,K_{C}) )^{G''}$ and $G \subseteq G''$, we have
  \begin{gather*}
    r'' -3 \leq N'' \leq N = r -3 \leq r''-3.
  \end{gather*}
  Hence $N=N'=N''=r-3=r'-3=r''-3$. This shows that $\zg(\mm, G, \theta
  ) = \zg (\mm', G', \theta') =\zg (\mm'', G'', \theta'')$.  The last
  statement follows by considering the inclusions of sets $G / G\cap
  G' \hookrightarrow G'' / G'$.
\end{proof}

\begin{teo}
  In genus 2 the data satisfying \eqref{bona} yield the following four
  Shimura subvarieties:
  \begin{center}
    \begin{tabular}[c]{ll}
      $   N =1$ & $(3) =(5) = (28) = (30),$ 
      $(4) = (29).$\\
      $N=2$ &$ (26).$\\
      $N =3 $ &$ (2).$
    \end{tabular}
  \end{center}
  In particular in genus 2 all families are abelian.
\end{teo}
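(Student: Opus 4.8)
The plan is to prove the genus-2 classification in two separate steps: first establish that in genus $2$ every datum satisfying \eqref{bona} has abelian Galois group, and then carry out the explicit identification of the coincidences among the data listed in Table \ref{data}. For the first step, I would use the fact that in genus $2$ every curve is hyperelliptic, so the hyperelliptic involution $\iota$ is central in $\Aut(C)$ and acts as $-\id$ on $H^0(C,K_C)$. Given a generic curve $C$ in a family with datum $\datum$, consider the full automorphism group $G''=\Aut(C)$ and apply Lemma \ref{trick}, so that without loss of generality I may assume $G$ contains $\iota$. The quotient $C/\langle \iota\rangle\cong\PP^1$ realizes $G/\langle\iota\rangle$ as a subgroup of $\operatorname{PGL}(2,\C)$ acting on $\PP^1$; the finite subgroups of $\operatorname{PGL}(2,\C)$ are cyclic, dihedral, or one of $A_4,S_4,A_5$. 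The condition $N=r-3>0$ (with $r\geq4$) together with the Chevalley--Weil and Eichler formulas \eqref{eq_ChevWeilFormula}, \eqref{carg} restricts the possibilities drastically, and I expect only the abelian cases (yielding $G$ cyclic or $\Zeta/2\times\Zeta/2$-type central extensions, i.e. $D_4$, $D_6$, $S_3$ appearing through their Jacobian action but reducing to abelian data via Hurwitz equivalence) to survive.

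For the second step, the identification of the distinct subvarieties, I would argue via the braid-group and $\Aut(G)$ action (Hurwitz equivalence, see \S\ref{family}) together with Lemma \ref{trick}. The equalities $(3)=(5)=(28)=(30)$ and $(4)=(29)$ claim that data with different groups (e.g. cyclic versus $S_3$, $D_4$, $D_6$) define the same locus in $\ag$. To prove each such equality, I would exhibit a single genus-$2$ curve $C$ whose automorphism group $G''$ contains all the relevant groups as subgroups, and verify using Lemma \ref{trick} that the corresponding families all coincide with $\zg(\mm'',G'',\theta'')$. Concretely, for $N=1$ the relevant loci are $1$-dimensional ($r=4$), so I would write down an explicit one-parameter family of hyperelliptic genus-$2$ curves (for instance $y^2=f(x)$ with $f$ of the appropriate symmetric form) and directly read off the extra automorphisms, mirroring the computation done for family (25) in \ref{25=38}. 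The inequality in Lemma \ref{trick} relating $|G''|$ to $|G|\cdot|G'|/k$ is the tool that bounds which groups can be fitted together, pinning down $G''$.

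The main obstacle will be the bookkeeping in the second step: verifying that the listed numerical coincidences are \emph{exactly} these and no others requires checking, for each pair of data in Table \ref{data} of the same dimension and genus, whether the generic curves admit a common larger automorphism group, and conversely that distinct entries (such as $(4)=(29)$ versus $(3)=(5)=(28)=(30)$) genuinely give \emph{different} subvarieties. The separation of the two $N=1$ families cannot follow from dimension alone; I would distinguish them by an invariant of the generic curve, such as the structure of $\Aut(C)$ or the configuration of branch points and local monodromies, which differ between the two cases. For the $N=2$ family $(26)$ and the $N=3$ family $(2)$ there is nothing to identify (each is the unique datum with its value of $N$ in genus $2$), so these require only confirming $N=r-3$ via \eqref{N2}, which Theorem \ref{criterio} then upgrades to the special-subvariety conclusion. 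The final clause, that all genus-$2$ families are abelian, is then immediate once the first step shows every surviving datum is Hurwitz-equivalent to one with abelian group.
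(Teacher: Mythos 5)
Your overall architecture does not match how the statement actually gets proved, and it contains a conceptual error. Hurwitz equivalence is the action of $\braid\times\Aut(G)$ on data \emph{with a fixed group} $G$; it can never turn a datum with group $S_3$, $D_4$ or $D_6$ into one with an abelian group. So your first step --- showing every genus-2 datum satisfying \eqref{bona} ``reduces to abelian data via Hurwitz equivalence'' --- cannot work as stated. The non-abelian data (28), (29), (30) genuinely appear in Table \ref{data}; the clause ``in genus 2 all families are abelian'' is not a statement about the data but about the loci, and it is a \emph{corollary} of the identifications $(28)=(30)=(3)=(5)$ and $(29)=(4)$, not something to be established beforehand by classifying subgroups of $\operatorname{PGL}(2,\C)$. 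Relatedly, you invoke Lemma \ref{trick} to ``verify that the families coincide,'' but that lemma runs in the opposite direction: its \emph{hypothesis} is that two data give the same subvariety, and its use in the paper is contrapositive, to rule out coincidences (e.g.\ $(4)\neq(30)$ because $D_6$ is maximal among the occurring groups and $\Zeta/4$ does not embed in $D_6$). It cannot prove an equality of loci.

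The actual mechanism for proving the equalities, which is missing from your sketch, is a containment-plus-dimension argument in two directions. Going down: for (30) one takes the normal subgroup $H=\langle x^2\rangle\cong\Zeta/3$ of $D_6$, computes via Riemann--Hurwitz that $C/H\cong\PP^1$, concludes that (30) is contained in a one-dimensional family of $\Zeta/3$-covers, and then uses the \emph{completeness} of the computed list of all data (not just those with $N=r-3$) to identify that family as (3); similarly for (29) with $H=\langle x\rangle\cong\Zeta/4$ inside $D_4$. Going up: since every genus-2 curve is hyperelliptic, the $\Zeta/3$-family (3) sits inside a $\Zeta/6$-family, which by uniqueness must be (5), and likewise (28) sits inside the unique $D_6$-family (30). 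Equality then follows because all these loci have the same dimension. Your alternative of writing explicit equations $y^2=f(x)$ and reading off automorphisms could in principle substitute for these steps, but as written it is only a declaration of intent (``I expect only the abelian cases to survive''), and it omits the uniqueness step --- knowing that the quotient family has group $\Zeta/3$ is not enough; one must know there is only one such datum in the list, which is where the exhaustive computer search enters.
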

\begin{proof}
  Family (2) coincides with $\mathsf{M}_2$. It is different from all
  other families just by dimension reasons. Similarly (26) is
  different from all other families.

  It remains to deal with the 1-dimensional families.  First we show
  that (3) = (30).  The Galois group of (30) is $D_6$. Using the
  notation of \ref{list} set $H: = \sx x^ 2\xs = \{ 1, x^2 , x^4
  \}$. Since the only elements of order 3 in $D_6$ are $x^2 $ and
  $x^4$, $H$ is a normal subgroup of $D_6$. For a given element $C$ of
  the family (30), set $B:= C /H$.  We claim that $B = \PP^1$.  Denote
  by $\pi : C \ra \PP^1 = C /D_6$ the original covering that defines
  family (30).  The fixed points of $H$ are exactly the points of the
  fibre $\pi\meno(t_4)$. So there are exactly 4 points of $C$ that are
  fixed by $H$. By the Riemann-Hurwitz formula $g(B)=0$. Therefore the
  family (30) is contained in a family of cyclic coverings of the line
  with Galois group $ \Zeta /3$. This family does not necessarily
  satisfy \eqref{bona}. Nevertheless the \verb|MAGMA| script gives the
  list of all families, not only the ones satisfying \eqref{bona}, see
  \ref{say-app}. Looking at this list we conclude that this family
  must be (3), so (3) = (30).

  Since every genus 2 curve is hyperelliptic, the family (3) must be
  contained in some family satisfying \eqref{bona} with Galois group
  $\Zeta /3 \times \Zeta /2 = \Zeta /6$. There is only one such
  family, namely (5). Therefore (3) =(5).  The same reasoning shows
  that (28) must be contained in a family with group $S_3 \times \Zeta
  /2 = D_6$. Again there is only one family with this Galois group, so
  (28) = (30).  We have proven (3) = (5) = (28) = (30).

  The group $D_6$ is maximal in the list of possible groups. If we had
  (4) $= $ (30), then by Lemma \ref{trick} there should exist a
  monomorphism $\Zeta /4 \hookrightarrow D_6$. Since this is not
  possible, (4) $\neq $ (30).

  Finally we check that (4) = (29).  Inside $D_4$ consider the
  subgroup $H:=\sx x \xs \cong \Zeta /4$.  Let $C$ be an element of
  the family (29) with covering map $\pi : C\ra \PP^1 = C/D_4$. Set
  $B:= C / H$. The ramification of the projection $C \ra B$ is given
  by $\pi\meno(t_2) \cup \pi\meno (t_4)$. The first fiber consist of 4
  points with stabilizer 
  $\sx x^2 \xs$. The second fiber consists of 2 points with
  stabilizer 
  $H$. By Riemann-Hurwitz we get that $g(B) =0$.  Thus (29) is
  contained in a family with structure group $\Zeta /4$.  This family
  does not necessarily satisfy \eqref{bona}, nevertheless using the
  list of all families obtained using the \verb|MAGMA| script (see
  \ref{say-app}), we conclude that there is only one such family,
  namely (4). Thus we get (29) = (4).
\end{proof}

\begin{teo}
  \label{id-3} In genus 3 the data satisfying \eqref{bona} yield the
  following 9 distinct Shimura subvarieties:
  \begin{center}
    \begin{tabular}[c]{ll}
      $   N =1$ & 
      $  (7) = (23) = (34)$, $ (9)$,  $  (22),$  $  (33) = (35)$.     \\
      $N=2$ & $  (6)$,        $(8)$,   $(31)$,
      $(32)$. \\
      $N =3 $ &$ (27).$ 
    \end{tabular}
  \end{center}
  In particular there are 3 new non-abelian Shimura families.
\end{teo}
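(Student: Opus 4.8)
The plan is to mirror the genus $2$ argument, grouping the data by the value of $N$, which is exactly the dimension $r-3$ of $\zg\datum$. Families of different dimension cannot coincide, so the three--dimensional family $(27)$ is automatically distinct from the rest, the two--dimensional families $(6),(8),(31),(32)$ are separated from the one--dimensional ones, and the real work is to settle coincidences and distinctness within the block $N=2$ and within the block $N=1$. For each asserted equality I will realise a generic curve of one family as a covering of $\PP^1$ by a smaller subgroup and then identify the resulting datum against the exhaustive list produced by the \verb|MAGMA| script (see \ref{say-app}); for each asserted inequality I will appeal to Lemma \ref{trick}.

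To prove $(33)=(35)$ I begin with a generic curve $C$ of family $(35)$, with Galois group $S_4$ and monodromy orders $(2,2,2,3)$, and restrict the action to $A_4\trianglelefteq S_4$. The quotient $C/A_4\to C/S_4=\PP^1$ is then a double cover, branched precisely over those $t_i$ whose monodromy $x_i$ is an odd permutation; from the generators of $(35)$ these are $t_1$ (where $x_1=(13)$) and $t_3$ (where $x_3=(12)$), so there are exactly two branch points and Riemann--Hurwitz gives $g(C/A_4)=0$. Thus $C$ is an $A_4$--cover of $\PP^1$ with induced ramification $(2,2,3,3)$, which the list identifies as family $(33)$; since both families are irreducible of dimension one, $(35)\subseteq(33)$ yields $(33)=(35)$. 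The chain $(7)=(23)=(34)$ is treated the same way: inside the order--$16$ group of $(34)$ I single out a normal subgroup $H$ with $g(C/H)=0$ (again by a Riemann--Hurwitz count from the fixed--point formula of \S\ref{say-ssg}), exhibiting $C$ as a cyclic and as an abelian cover and matching it, through the list, with the previously known families $(7)$ and $(23)$.

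For distinctness I use Lemma \ref{trick}: were two data satisfying \eqref{bona} to define the same Shimura variety, there would be a third datum, still satisfying \eqref{bona} in genus $3$, whose group $G''$ receives monomorphisms from both groups and satisfies $|G''|\ge |G|\,|G'|/k$. Since a genus $3$ curve obeys $|\Aut(C)|\le 168$, this bound narrows the candidates drastically, and one then checks against the classification that no such common overgroup $G''$ actually occurs; this rules out, for example, any coincidence between $(33)=(35)$ (group $S_4$) and the block $(7)=(23)=(34)$. A frequently faster separator is hyperellipticity: as in \ref{HE-1}--\ref{HE-4}, one determines which families lie in the hyperelliptic locus (by \ref{HE-4}, $(8)$ and $(22)$ do), and a hyperelliptic family cannot coincide with a non--hyperelliptic one, which for instance distinguishes $(8)$ from the non--abelian families $(31)$ and $(32)$.

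It then remains to count the new non--abelian families. After the identifications, the distinct Shimura varieties that are both new and non--abelian are $(31)$, $(32)$ and $(33)=(35)$: although $(34)$ carries a non--abelian group of order $16$, it coincides with the classical cyclic and abelian families $(7)$ and $(23)$ and so contributes no new variety. I expect the principal difficulties to be, first, extracting the intermediate genera $g(C/H)$ correctly from the fixed--point data, and, second, the fact that the distinctness arguments ultimately lean on the completeness of the \verb|MAGMA| list rather than on a self--contained group--theoretic obstruction; making that dependence explicit and checking the relevant groups by hand is the delicate part.
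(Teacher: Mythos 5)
Your overall strategy is the same as the paper's: separate by dimension, prove the coincidences by passing to the quotient of a generic curve by a subgroup and matching the resulting datum against the complete \verb|MAGMA| list, and prove distinctness via Lemma \ref{trick} and hyperellipticity. Your treatment of $(33)=(35)$ is correct and in fact slightly more explicit than the paper's (you identify the two branch points of the double cover $C/A_4\to\PP^1$ via the sign character). However, there is a genuine gap in the chain $(7)=(23)=(34)$: after you exhibit a generic curve of $(34)$ as a $\Zeta/4$--cover of $\PP^1$, the complete genus-$3$ list contains \emph{three} families with group $\Zeta/4$ (and likewise three with group $\Zeta/4\times\Zeta/2$), so ``matching it through the list'' does not single out $(7)$ or $(23)$. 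The paper resolves this by first proving that $(34)$ is not hyperelliptic (no hyperelliptic involution inside $G$ by the character of $\rho$, and none outside $G$ because $G$ is maximal among the genus-$3$ groups), and observing that two of the three candidate families are hyperelliptic; without that step your identification is not pinned down.

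A second, smaller gap is in the distinctness checks, which you defer to ``checking against the classification.'' For most pairs Lemma \ref{trick} plus maximality or order considerations suffices, but for $(6)$ versus $(31)$ it does not: $\Zeta/3$ embeds in $S_3$, so $G''=S_3$ is a perfectly admissible common overgroup and the lemma gives no obstruction. The paper needs a separate argument here --- either comparing the set of traces of $\rho$ restricted to subgroups, or checking that the order-$3$ subgroup $H\subset S_3$ acting on a curve $C$ of family $(31)$ has $g(C/H)=1$, so $(31)$ cannot be the $\Zeta/3$--family $(6)$. You should isolate this pair (and, similarly, make explicit the non-hyperellipticity of $(31)$, $(32)$ and $(35)$ that your hyperellipticity separator relies on) rather than subsuming it under the general overgroup bound.
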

\begin{proof}
  Since (27) is the only family of dimension 3, it is clearly distinct
  from all the others.

  There are 4 families of dimension 2: (6), (8), (31) and (32).  We
  want to prove that they are all different from each other.  Since
  $S_3$ and $D_4$ are maximal within groups in these families, Lemma
  \ref{trick} implies that (31) $\neq $ (32).  Similarly (6) $\neq $
  (8) since there is no group $G''$ appearing in these 4 families with
  $ |G''| \geq 12$.  Moreover $D_4$ does not contain a subgroup
  isomorphic to $\Zeta /3$, so $(6) \neq (32)$. And similarly $(8)
  \neq (31)$.

  We now check that (31) $\neq$ (6). If $G$ acts on a curve $C$ and $H
  \subseteq G$ is a subgroup, then the representation of $G$ on
  $H^0(C,K_C)$ obviously restricts to the reprensentation of $H$ on
  $H^0(C,K_C)$, so $\tr (\rho(H)) \subseteq \tr(\rho(G))$.  The
  computation using the \verb|MAGMA| script gives the full character
  of $\rho$ for both families (see \ref{say-app}) and one can check
  that this does not happen.  Another way of seeing this would be to
  check that the unique subgroup $H \subset S_3$ of order 3, which is
  $H = \sx x \xs$, acts on an element $C$ in the family (31), in such
  a way that $C/H$ has genus 1.

  Finally we check that (32) $\neq$ (8).  One can just observe that
  (8) is hyperelliptic, while from the character of $\rho(D_4)$ it
  follows that $D_4$ does not contain any hyperlliptic
  involution. Since there is no familiy with group containing $D_4
  \times\Zeta /2$, it follows that $(32) $ is not hyperelliptic, hence
  $(8) \neq (32)$.  By the same argument one shows that also family
  (31) is not hyperelliptic.  This completes the analysis of
  2-dimensional families.

  There are 7 data yielding families of dimension 1 and we want to
  prove that they yield exactly four distinct families as follows:
  \begin{gather*}
    (7) = (23) = (34) \qquad (33) = (35) \qquad (9) \qquad (22) .
  \end{gather*}
  First observe that (34) is not hyperelliptic.  By looking at the
  character of $\rho$ one can see that there is no hyperelliptic
  involution contained in $G =((\Zeta/4) \times (\Zeta/2)) \rtimes
  \Zeta/2 $.  Since $G$ is maximal among the groups of the genus 3
  families, there is no family with group $G\times \Zeta /2$. Thus
  (34) is not hyperelliptic.

  Next we show that $(7) = (34)$.  Set $H:= \sx y_3 \xs =Z(G)$. For
  $C$ an element of the family (34), one can check as above that $g(C
  /H) = 0$. So $(34)$ is included in a family with Galois group $\Zeta
  /4$. This family does not necessarily satisfy $N=r-3$. From the
  complete list of data in genus 3, one sees that there are 3 such
  families. One can check that two of them are hyperelliptic.  The
  third one is (7). Since (34) is not hyperelliptic it follows that
  $(34) = (7)$.

  The same argument shows that (23) = (34). In fact take $H:= \sx y_2,
  y_3 \xs \subseteq G$.  One can check that $C/H = \PP^1$. So (34) is
  also contained in a family with group $\Zeta/4 \times \Zeta /2$.
  There are 3 such families and 2 of them are hyperelliptic. So (34)
  must coincide with the third, which is (23).

  The same argument as above shows that $(33) = (35)$.  Indeed, if $C$
  in an element of (35), then $C /A_4 = \PP^1$, so (35) is contained
  in another family (not necessarily with $N=r-3$) with group $A_4$.
  Since (33) is the unique such family, we conclude (35) = (33).

  (34) $\neq $ (35) since both groups are maximal.

  (35) $\neq $ (9) since $S_4$ is maximal and contains no elements of
  order $6$.

  (7) $\neq $ (9) since the only groups with order a multiple of 12
  are $A_4$ and $S_4$, but $(33) = (35) \neq (9)$.

  (34) $\neq $ (22) since (22) is hyperelliptic and (34) is not.

  (35) is not hyperelliptic, since $S_4$ is centerless and maximal. So
  $(35) \neq (22)$.

  Finally $(9) \neq (22)$. Otherwise by Lemma \ref{trick} they would
  be equal to a family with a Galois group $G''$ of order at least
  24. The only possibility would be (9) = (22) = (35), which is false.
\end{proof}

\begin{teo}
  In genus 4 the data satisfying \eqref{bona} yield the following 8
  distinct Shimura subvarieties:
  \begin{center}
    \begin{tabular}[c]{ll}
      $   N =1$ & 
      $  (11)$,  $  (12)$,       $  (13) = (24),$ 
      $  (25) = (38), $ $  (36),$ 
      $ (37) .$ \\
      $N=2$ & $   (14)$. \\
      $N =3 $ &$ (10).$ 
    \end{tabular}
  \end{center}
  In particular there are 2 non-abelian Shimura families.
\end{teo}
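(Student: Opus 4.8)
The plan is to follow the strategy already used above for genus $2$ and $3$. The first and easiest step is to separate families by dimension: since $N=r-3$, the family (10) with $N=3$ is the unique $3$-dimensional locus and (14) with $N=2$ the unique $2$-dimensional one, so each is automatically distinct from all the others. This reduces the problem to the eight one-dimensional data (11), (12), (13), (24), (25), (36), (37), (38), which must be shown to yield exactly six distinct loci, with the coincidences $(13)=(24)$ and $(25)=(38)$.

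Next I would establish these two identifications. The identity $(25)=(38)$ has already been proved in \ref{25=38} by exhibiting the extra $(\Zeta/3)\times S_3$--action on the $\Zeta/3\times\Zeta/3$ family. For $(13)=(24)$ I would argue exactly as in the genus $2$ and $3$ identifications: in the larger of the two Galois groups choose a suitable (normal) subgroup $H$, verify via Riemann--Hurwitz that the intermediate quotient $C/H$ has genus zero, conclude that the family embeds in a family of covers with the smaller structure group, and then read off from the complete \verb|MAGMA| list (\ref{say-app}) --- not merely the sublist satisfying \eqref{bona} --- that there is a unique such family, which forces the coincidence.

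The main work is to show that the resulting six loci are pairwise distinct, for which I would combine three invariants. First, hyperellipticity: family (36) is hyperelliptic by \ref{HE-4}, while (37) and $(25)=(38)$ are not, by \ref{HE-1} and \ref{HE-3}; testing the remaining families (11), (12), (13) against (36) in the same way isolates the hyperelliptic locus (36). Second, Lemma \ref{trick} together with the orders and subgroup structure of the Galois groups: if two of these families coincided there would be a single datum in the list whose group $G''$ contains both Galois groups, and this is excluded by the order bound of Lemma \ref{trick} and by elementary facts such as $Q_8$ having no element of order $3$ whereas $A_4$ does, and $A_4$ (order $12$) having no element of order $6$ whereas the group of (38) does. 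Third, for the structurally similar abelian one-dimensional families (11), (12), (13) I would compare the full characters of $\rho$ produced by the script, using that $\tr(\rho(H))\subseteq\tr(\rho(G))$ for any subgroup $H\subseteq G$ to rule out both coincidences and inclusions. Finally, since $(25)=(38)$ is realized by the abelian datum (25) and (11), (12), (13)=(24) are abelian, the only loci not obtainable from an abelian datum are (36) and (37), which gives the asserted count of two non-abelian Shimura families.

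The step I expect to be the main obstacle is the separation of the abelian one-dimensional families (11), (12), (13)=(24): their Galois groups are small and structurally similar, so hyperellipticity and crude order counts need not distinguish them, and one likely has to compare the explicit characters of $\rho$ output by the \verb|MAGMA| script to certify that no two coincide. Choosing the correct subgroup $H$ for the $(13)=(24)$ identification and matching it against the full family list also requires some care.
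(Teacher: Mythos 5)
Your proposal is correct and follows the same overall strategy as the paper: separation by dimension, the identifications $(25)=(38)$ (via \ref{25=38}) and $(13)=(24)$ (via an intermediate quotient), and pairwise distinctness via Lemma \ref{trick}, maximality of the Galois groups, and hyperellipticity. Two tactical points differ and deserve care. First, for $(13)=(24)$ the step ``read off that there is a unique such family'' is too quick: genus $4$ has three data with group $\Zeta/6$ (namely (12), (13), (14)), so after verifying $g(C/H)=0$ for a subgroup $H\cong\Zeta/6$ of $\Zeta/2\times\Zeta/6$ you must also compute the ramification data and monodromy of $C\to C/H$; the paper finds $(3,3,6,6)$ with monodromy $(1,1,2,2)$ for one of the three such subgroups, which pins down (13). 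The same computation settles your anticipated ``main obstacle'': since $\Zeta/2\times\Zeta/6$ is maximal, $(12)=(13)=(24)$ would force the $\Zeta/6$ of (12) to be one of these three subgroups, but none of the quotients has ramification $(2,6,6,6)$ --- so no character comparison is needed. Second, the paper separates (36) from the abelian families purely group-theoretically ($Q_8$ is maximal and contains no element of order $3$ or $5$), which avoids having to decide whether (11), (12), (13) are hyperelliptic, and it dispatches (11) at once because its group is the only one of order divisible by $5$. These are refinements of tools you already invoke, so your plan goes through.
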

   
\begin{proof}
  (14) is the only 2--dimensional family and (10) is the only
  3--dimensional one.

  We analyze the 1--dimensional data.  (11) is the only one with
  Galois group of order divisible by 5. So by Lemma \ref{trick} it is
  different from all the other families.

  We already know that (38)=(25) from \ref{25=38}.

  The group $Q_8$ is maximal among the ones appearing as Galois groups
  in the genus 4 families.  (36) is hyperelliptic by \ref{HE-4}.  So
  it is different from (37) and (25) by \ref{HE-1}, \ref{HE-3}.  By
  Lemma \ref{trick} it is also different from all the abelian ones:
  these have either an element of order 5 or an element of order 3.
  Thus (36) is different from all other families.

  By \ref{trick} (37) and (38) are different, since both groups are
  maximal.  We claim that (37) is different from all the abelian ones.
  If (37) is equal to some abelian family, the abelian group must be
  contained in $A_4$. This never happens.  Thus also (37) is different
  from all other families.

  Now consider (24). The group $G=\Zeta / 2 \times \Zeta /6 $ is
  maximal. An epimorphism for this family is given by
  \begin{gather*}
    x_1 = (1,0) \qquad x_2 = (0,3) \qquad x_3 = (0,2) \qquad x_4 =
    (1,1).
  \end{gather*}
  (Compare with Table 2 in \cite{moonen-oort}.)  The subgroups of $G $
  isomorphic to $\Zeta /6$ are
  \begin{gather*}
    H_1 = \sx (0,1) \xs, \qquad H_2 = \sx (1,1) \xs \qquad H_3 = \sx
    (1,2) \xs.
  \end{gather*}
  One can check that for any element $C$ of the family (24) and for
  any $i=1,2,3$ we have $C /H_i = \PP^1$. Moreover the map $C \ra
  C/H_2$ has ramification data $\mm=(3,3,6,6)$ and has monodromy ${a}
  = (1, 1, 2,2)$ (notation as in \cite{moonen-oort}). Hence we
  conclude that (24)=(13).  We note in passing that the other maps $ C
  \ra C/H_i$ for $i=1,3$ show that $(24) \subset (14)$.  One
  needs to use the fact that in genus 4 there is a unique family --
  not necessarily satisfying \eqref{bona} -- with group $\Zeta/6$ and
  ramification (2,2,3,3,3); this is family (14).

  Since the group $G$ is maximal, and none of the 3 subgroups $H_i$
  yields a family with ramification $(2,6,6,6)$ we conclude that $(12)
  \neq (24)$.

  By maximality (24) $\neq $ (38).

  Finally we show that $(12) \neq (38)$.  There are three subgroups
  $H_i \subseteq \Zeta /3 \times S_3$, $i=1,2,3$ isomorphic to $\Zeta
  / 6$. One can check that $C / H_i = \PP^1$ for any $i$ and for any
  $C$ in the family (38). But for all $i$ the ramification of the map
  $C \ra C / H_i$ is of type (2,2,3,3,3).  Since (38) is maximal, this
  shows that $(12) \neq (38)$ and also that $(38) \subset (14)$.

\end{proof}

\begin{teo}
  In genus 5 there are exactly two distinct 1-dimensional Shimura
  subvarieties, (15) and (39). The second one is non--abelian.  In
  genus 6 there are exactly two distinct 1-dimensional Shimura
  subvarieties, (17) and (18) and a 2-dimensional one (16). They are
  all cyclic.  In genus 7 there are exactly 3 distinct 1-dimensional
  Shimura subvarieties, (19), (20) and (40). The last one is
  non--abelian.
\end{teo}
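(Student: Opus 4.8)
The plan is to prove the genus 5, 6 and 7 cases by showing that the listed families are pairwise distinct, using essentially the same toolkit developed in the genus 2, 3 and 4 theorems: dimension counts, Lemma \ref{trick} applied to group orders and to the existence of monomorphisms between Galois groups, and the hyperellipticity obstructions from \ref{HE-2} and \ref{HE-4}. In each genus the first observation is that families of different dimension are automatically distinct, so the only work is to separate families of equal dimension. I would organize the proof genus by genus, treating the positive-dimensional distinctions first and then the one-dimensional ones.

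For genus 5 there is only the pair of $1$-dimensional families (15) and (39) to separate. Here the cleanest argument uses hyperellipticity: by \ref{HE-4} the family (39) lies in the hyperelliptic locus, since $\chi_\rho(y_2) = -5$ forces $y_2$ to act as $-\id$. So it suffices to check that (15) is \emph{not} hyperelliptic, or more simply to note that the Galois groups differ in an essential way and that no larger family with an embedding of both groups satisfying \eqref{bona} exists; by Lemma \ref{trick} this already gives $(15)\neq(39)$. For genus 6 all three families are cyclic, so I would invoke the known classification of cyclic (Moonen) data together with dimension: the $2$-dimensional family (16) is distinct from (17) and (18) by dimension, and (17) and (18) are distinct because they have different ramification data (or different Galois groups) among the cyclic families in Table \ref{data}, which one reads off directly from the complete list produced by the \verb|MAGMA| script as in \ref{say-app}.

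The genus 7 case, with the three $1$-dimensional families (19), (20) and (40), is where I expect the real content to lie, and it is the main obstacle. The new non-abelian family (40) has Galois group $\Sl(2,\mathbb{F}_3)$, whereas (19) and (20) are cyclic. I would argue $(40)\neq(19)$ and $(40)\neq(20)$ via Lemma \ref{trick}: if (40) coincided with a cyclic family, there would be a common overgroup $G''$ admitting monomorphisms from both $\Sl(2,\mathbb{F}_3)$ (order $24$) and the relevant cyclic group, and one checks the order bound $|G''|\geq |G||G'|/k$ is incompatible with the groups appearing in genus $7$ in Table \ref{data}. Since $\Sl(2,\mathbb{F}_3)$ is maximal among the genus $7$ Galois groups, this forces a contradiction. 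Finally, $(19)\neq(20)$ is a matter of comparing the two cyclic data — distinct ramification data or distinct orders of the cyclic groups — read off from the complete list; since both are cyclic and already classified in the references cited for the cyclic data, they are genuinely distinct Shimura subvarieties.

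The delicate point throughout is that Lemma \ref{trick} only gives necessary conditions for two families to coincide, so to conclude \emph{distinctness} one must verify that the required common overgroup $G''$ (with the appropriate subgroups, ramification and dimension) simply does not occur in the exhaustive list of data. I would therefore lean systematically on the maximality of $\Sl(2,\mathbb{F}_3)$, $D_6$-type and the cyclic groups within their respective genera, combined with the \verb|MAGMA|-generated full list of families (including those not satisfying \eqref{bona}), exactly as in the proofs of the genus 2, 3 and 4 theorems. The hyperellipticity criterion of \ref{HE-4} remains the quickest separator whenever one family is hyperelliptic and a competing one is not.
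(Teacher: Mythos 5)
Your overall strategy --- dimension counts plus Lemma \ref{trick} tested against the exhaustive list of data --- is exactly the paper's strategy, and your treatment of the non-abelian families is essentially right: the paper handles genus 5 purely by Lemma \ref{trick} (which also spares you from having to decide whether (15) is hyperelliptic, a fact you would otherwise need and do not establish), and it separates (40) from (19) and (20) just as you describe. The genuine gap is in how you separate the pairs of cyclic families. You assert that $(17)\neq(18)$ and $(19)\neq(20)$ ``because they have different ramification data (or different Galois groups)'' and because ``both are cyclic and already classified.'' Distinctness of data does not imply distinctness of the subvarieties $\zgm$ --- that is the entire point of \S\ref{id-section}, and the identifications $(3)=(5)$, $(7)=(23)$, $(13)=(24)$ already exhibit pairs of data, including cyclic ones with different groups and different ramification, that yield the \emph{same} Shimura subvariety. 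So this step, as written, proves nothing.

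The repair is to apply Lemma \ref{trick} to these pairs too, which is what the paper does uniformly. If $(17)=(18)$ there would exist a datum satisfying \eqref{bona} in genus 6 whose group contains monomorphic images of $\Zeta/7$ and $\Zeta/10$; since these intersect trivially inside any common overgroup, its order would be at least $70$, whereas the genus-6 groups in Table \ref{data} have orders $5$, $7$, $10$. Likewise, if $(19)=(20)$ the overgroup would contain $\Zeta/9$ and $\Zeta/12$ meeting in a subgroup of order at most $3$, hence would have order at least $36$, while the genus-7 groups have orders $9$, $12$, $24$ (and $\Sl(2,\mathbb{F}_3)$ contains no element of order $9$ or $12$ in any case). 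Note that Lemma \ref{trick} guarantees the common overgroup datum itself satisfies \eqref{bona}, so for these comparisons it suffices to inspect Table \ref{data} rather than the full \texttt{MAGMA} list. With this substitution your proof closes and coincides with the paper's.
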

\begin{proof}
  In genus 5 there are only two data satisfying \eqref{bona}: (15)
  with group $\Zeta/8$ and (39) with group $\Zeta /3 \rtimes \Zeta
  /4$. By Lemma \ref{trick} they are distinct.  In genus 6 there are
  three data satisfying \eqref{bona}: (16) with group $\Zeta /5$, (17)
  with group $\Zeta /7$ and $(18)$ with group $\Zeta /10$.  The first
  one is 2-dimensional the others are 1-dimensional.  By Lemma
  \ref{trick} the subvariety corresponding to (17) is different from
  the one of (18), while (16) is distinct from the others by
  dimension.  In genus 7 there are three data satisfying \eqref{bona}:
  (19) with group $\Zeta /9$, (20) with group $\Zeta/12$ and (40) with
  group $\Sl(2, \mathbb{F}_3) $. Again by Lemma \ref{trick} they are
  all distinct.
\end{proof}

Finally we notice that the results in this section give the proof of
Theorem \ref{mainC}.

\appendix
\section{}

\begin{say}
  This appendix gives the relevant information on the script and
  contains a table of all the data $(\mm, G, \theta)$ with genus
  $g\leq \numero$ and $N=r-3>0$ up to Hurwitz equivalence.
\end{say}

\begin{say}\label{say-app}
  To perform our calculations we wrote a \verb|MAGMA| \cite{MA}
  script, which is available at:

  \medskip

  \verb|users.mat.unimi.it/users/penegini/|

  \verb|publications/PossGruppigFix_v2Hwr.m|.  \medskip

\noindent

The program performs the following calculations. The first two
steps correspond to the  algorithm already described in \cite{BCGP}. 
\begin{enumerate}[leftmargin=*]
\item For a given group order and genus $g \geq 2$ the first routine
  of the program returns all the local monodromies
  $\mathbf{m}:=(m_1,\ldots ,m_r)$ compatible with the Riemann--Hurwitz
  formula
    \begin{equation*}\label{form.RH} 2g(C) - 2 = |G|\left(-2 +
      \sum_{i=1}^r \left(1 - \frac{1}{m_i}\right)\right).
  \end{equation*}
  These are finite. In fact the value of $\sum_{i=1}^r(1-
    1/m_i)$ is fixed and $m_i \geq 2$. Therefore $r$ is bounded. Since
    $ m_i \leq |G|$, there is a finite number of possibilities.  The
  function in the script that performs this calculation is
  \verb|Signature|.
\item After that, the program calculates all groups $G$ of a fixed
  order and all spherical systems of generators (SSG) for $G$ of a
  fixed type $\mathbf{m}$ up to Hurwitz equivalence.  For more
  details on this see e.g. \cite{penegini2013surfaces}. Here we borrow
  some parts of the script given in \cite{BCGP} (function \verb|FindAllComponents|).  One can find the
  tables of all inequivalent pairs $(G$, SSG$)$ at the web page
\begin{verbatim}
http://users.mat.unimi.it/users/penegini/publications.html
\end{verbatim}
\item For each pair $(G$, SSG$)$ it calculates the multiplicity of
  each irreducible representation of $G$ inside $ \rho\colon G
  \longrightarrow {\rm GL}(H^0(C, K_C)) $ using the Chevalley-Weil
  formula \eqref{eq_ChevWeilFormula}. Here we borrowed parts of the
  script given in \cite{Sw}. The function that performs this calculation is \verb|CW.|

\item It calculates the number $N$ using \eqref{NCW} (function \verb|S2rreFormula|).

%
%
%
%
%
%
%
\item The final out-come, obtained by the main routines \verb|SSGT| and  \verb|AllCases|, is the list of all data $(\mm, G, \theta)$
  with genus $g\leq \numero$ up to Hurwitz equivalence, together with
  the number $N$, plus some additional information, e.g., if the group
\begin{table}[h]
  \caption{All data}
  \begin{center}
    \begin{tabular}{|c|c|c|c|c|c|c|}
      \hline
      $g(C)$ & $\ |G|\ $  & $  G$  & \texttt{Id} 
      &  $\mm$ & $\dim$  &  \\
      \hline
      $  1 $  & $2$ & $\Zeta/ 2$ &  G(2,1) &  $(2^4)$  & 1   &  (1)          \\ \hline

      $  1 $  & $4$ & $(\Zeta/ 2) \times (\Zeta/2)$ &  G(4,2) &  $(2^4)$  & 1   &  (21)          \\ \hline

      $  2 $  & $2$ & $\Zeta/ 2$ &  G(2,1) &  $(2^6)$    & 3 & (2)    \\ \hline

      $  2 $  & $3$ & $\Zeta/ 3$ &  G(3,1) &  $(3^4)$    & 1 & (3)    \\ \hline

      $  2 $  & $4$ & $\Zeta/ 4$ &  G(4,1) &  $(2^2, 4^2)$    & 1 & (4)    \\ \hline

      $  2 $  & $6$ & $\Zeta/ 6$ &  G(6,2) &  $(2^2 ,3^2)$    & 1 & (5)    \\ \hline

      $  2 $  & $4$ & $(\Zeta/ 2) \times (\Zeta/2)$ &  G(4,2) &  $(2^5)$    & 2 & (26)    \\ \hline

      $  2 $  & $6$ & $S_3$ &  G(6,1)  &      $(2^2,3^2) $ &1   &  (28)    \\ \hline

      $  2 $  & $8$ & $D_4$ &   G(8,3) &   $(2^3, 4)$   & 1  &  (29)  \\ \hline

      $ 2 $ & $ 12$ &  $D_6$ &               G(12,4) & $(2^3,3)$  &1   & (30) \\ \hline

      $  3 $  &   $3$ &    $\Zeta/ 3$      &        G(3,1) & $(3^5)$ & 2  & (6)   \\ \hline

      $  3 $  &   $4$ &    $\Zeta/ 4$      &        G(4,1) & $(4^4)$ & 1  & (7)   \\ \hline

      $  3 $  &   $4$ &    $\Zeta/4$      &        G(4,1) & $(2^3, 4^2)$ &2  & (8)   \\ \hline

      $  3 $  &   $6$ &    $\Zeta/ 6$      &        G(6,2) & $(2 , 3^2, 6)$ & 1  & (9)   \\ \hline

      $  3 $  &   $4$ &    $(\Zeta/ 2) \times (\Zeta/2)$      &        G(4,2) & $(2^6)$ &3  & (27)   \\ \hline

      $  3 $  &   $6$ & $S_3$ & G(6,1)  & $(2^4,3)$   &2   & (31)   \\ \hline

      $  3 $  &   $8$ & $(\Zeta/2) \times (\Zeta/4)$ &                    G(8,2)  & $(2^2,4^2)$  &1 & (22)  \\ \hline

      $  3 $  &  $8$ & $(\Zeta/2) \times (\Zeta/4)$ &                    G(8,2)  & $(2^2,4^2)$   &  1   & (23) \\ \hline

      $  3 $  &   $8$ & $D_4$ &  G(8,3) &                      $(2^5)$  &2  &(32)\\ \hline

      $  3 $  &     $12$ & $A_4$ & G(12,3)&                    $(2^2, 3^2)$  &1  & (33)  \\ \hline

      $  3 $  &  $16$ & $ \left(\Zeta/4 \times \Zeta/2\right) \rtimes   (\Zeta/2) $ & G(16,13) & $(2^3,4)$  &1 & (34)  \\ \hline

      $  3 $  &  $24$ & $S_4$ &                              G(24,12) & $(2^3,3)$   &1  & (35)  \\ \hline

      $  4 $ &  $3$ &  $\Zeta / 3$ &          G(3,1) & $(3^6)$    &3 & (10) \\ \hline

      $  4 $ &  $5$ &  $\Zeta / 5$ &          G(5,1) & $(5^4)$    &1 & (11) \\ \hline

      $  4 $ &  $6$ &  $\Zeta / 6 $ &          G(6,2) & $(2,6^3)$    &1 & (12) \\ \hline

      $  4 $ &  $6$ &  $\Zeta/ 6$ &          G(6,2) & $(3^2,6^2)$    &1 & (13) \\ \hline

      $  4 $ &  $6$ &  $\Zeta / 6$ &          G(6,2) & $(2^2,3^3)$    &2 & (14) \\ \hline

      $  4 $ &  $8$ &  $Q_8$ &          G(8,4) & $(2,4^3)$    &1 & (36) \\ \hline

      $  4 $ &  $9$ &  $(\Zeta/3 )\times ( \Zeta/3)  $ &       G(9,2) & $(3^4)$  &1 & (25) \\ \hline 

      $  4 $ &  $12$ &  $ (\Zeta/6 ) \times (\Zeta/2)$ &   G(12,5) & $(2^2,3,6) $ &1 & (24) \\ \hline

      $  4 $ &  $12$ &  $A_{4}$ &    G(12,3) & $(2,3^3) $ &1 & (37)\\ \hline

      $  4 $ &  $18$ & $(\Zeta/3) \times S_3$ & G(18,3) & $(2^2, 3^2)$  &1 & (38)  \\ \hline

      $ 5$  & $8$ & $\Zeta/8 $ &  G(8,1) & $(2,4, 8^2)$    &1   & (15) \\ \hline

      $ 5$  & $12$ & $(\Zeta/3) \rtimes (\Zeta/4)$ &  G(12,1) & $(2,3,4^2)$    &1   & (39) \\ \hline

      $ 6$  & $5$ & $\Zeta/5 $ &  G(5,1) & $(5^5)$    &2   & (16) \\ \hline

      $ 6$  & $7$ & $\Zeta/7 $ &  G(7,1) & $(7^4)$    &1   & (17) \\ \hline

      $ 6$  & $10$ & $\Zeta/10 $ &  G(10,2) & $(2,5^2,10)$    &1   & (18) \\ \hline

      $7$  & $9$  & $\Zeta/9$ &      G(9,1) & $(3,9^3)$   &1  & (19)\\ \hline

      $7$  & $12$  & $\Zeta/12$ &      G(12,2) & $(2,3,12^2)$   &1  & (20)\\ \hline

      $7$  & $24$  & $\Sl(2,\mathbb{F}_3)$ &      G(24,3) & $(2,3^3)$   &1  & (40)\\ \hline
    \end{tabular}
  \end{center}
  \label{data}
\end{table}
  is cyclic or not, the decomposition of the representation on
  $H^0(C,K_C)$, etc.. The program points out those examples for which
  $N=r-3$.  One can find this information for the families with $r\geq
  4$ at the web page
\begin{verbatim}
http://users.mat.unimi.it/users/penegini/publications.html
\end{verbatim}
\end{enumerate}

A similar script for \verb|GAP4| was used in \cite{matteo2011}.

\end{say}

\begin{say}
  We got 40 data $(\mm, G, \theta)$ with genus $g\leq \numero$ and
  $N=r-3>0$.  In Table \ref{data} we list them in order of increasing
  genus.  The numbers in the last column are given to label the
  families following the numeration already assigned in Table 1 and 2
  in \cite{moonen-oort}.  For $\mm$ we use a compact notation, for
  example $(2^2,3^2) = (2,2,3,3)$.  The column $\dim$ lists the
  dimension of $\zgm$.  The column $\texttt{Id} $ lists the
  $\texttt{IdSmallGroup} $ name of the group in the \verb|MAGMA|
  database.

  Examples (1) and (21) are classical.  Examples (2) -- (20) have
  cyclic Galois group and are already listed in
  \cite[p. 136-137]{rohde}, \cite{moonen-special} and
  \cite{moonen-oort}.  Examples (22) -- (27) have already been found
  in \cite{moonen-oort}.  Professor Xin Lu informed us that (36) has
  already been studied from a different point of view in
  \cite[Ex. 7.2]{lu-zuo-Mumford-prep}.  Notice that we get new data
  only for genus $g=2, 3, 4,5, 7$.
\end{say}

\begin{say}
  For the description of the monodromy (or equivalently of an
  epimorphism) in the non-abelian cases see \S
  \ref{examples-section}. For the abelian cases we refer to Tables 1
  and 2 in \cite{moonen-oort}.
\end{say}


\def\cprime{$'$}

\end{document}